\documentclass[a4paper,reqno,11pt]{amsart}
\usepackage[a4paper,margin=2.5cm]{geometry}
\usepackage[T1]{fontenc}
\usepackage[utf8]{inputenc}
\usepackage[english]{babel}
\usepackage{amssymb,amsmath,amsthm,mathtools}
\usepackage[dvipsnames,svgnames,table]{xcolor}
\usepackage{graphicx}
\usepackage{caption,subcaption}
\usepackage{lmodern}
\usepackage[foot]{amsaddr}
\usepackage[shortlabels]{enumitem}
\usepackage[unicode=true]{hyperref}
\usepackage[capitalise, noabbrev]{cleveref}
\usepackage{thmtools, thm-restate}
\usepackage{comment}
\usepackage{mdframed}
\usepackage{algorithmicx}
\usepackage[noend]{algpseudocode}

\DeclarePairedDelimiter\set{\{}{\}}

\DeclarePairedDelimiter\ceil{\lceil}{\rceil}

\newcommand{\q}[1]{``#1''}
\newcommand{\defin}[1]{\emph{\textcolor{ForestGreen}{#1}}}
\newcommand{\defmath}[1]{\defin{\text{$#1$}}}

\newcommand{\Oh}{\mathcal{O}}

\newcommand{\calL}{\mathcal{L}}

\DeclareMathOperator{\Max}{Max}
\DeclareMathOperator{\Min}{Min}

\let\le\leqslant
\let\ge\geqslant
\let\leq\leqslant
\let\geq\geqslant

\let\subset\subseteq

\let\epsilon\varepsilon

\let\setminus\backslash

\crefformat{equation}{#2(#1)#3}
\crefformat{subsection}{Subsection #2#1#3}

\makeatletter
\def\thm@space@setup{
  \thm@preskip=4mm
  \thm@postskip=0mm
}
\makeatother

\algdef{SE}[DOWHILE]{Do}{doWhile}{\algorithmicdo}[1]{\algorithmicwhile\ #1}

\mdfdefinestyle{dontsplit}{
  hidealllines=true,
  nobreak=true,
  leftmargin=0pt,
  rightmargin=0pt,
  innerleftmargin=0pt,
  innerrightmargin=0pt,
}

\newmdtheoremenv[style=dontsplit]{theorem}{Theorem}
\newmdtheoremenv[style=dontsplit]{lemma}[theorem]{Lemma}
\newmdtheoremenv[style=dontsplit]{observation}[theorem]{Observation}
\newmdtheoremenv[style=dontsplit]{proposition}[theorem]{Proposition}
\newmdtheoremenv[style=dontsplit]{question}[theorem]{Question} 
\newmdtheoremenv[style=dontsplit]{corollary}[theorem]{Corollary} 
\newmdtheoremenv[style=dontsplit]{problem}[theorem]{Problem}
\newmdtheoremenv[style=dontsplit]{conjecture}[theorem]{Conjecture}

\newtheorem*{problem*}{Problem}
\newtheorem*{conjecture*}{Conjecture}
\newtheorem*{remark*}{Remark}
\newtheorem*{question*}{Question} 

\theoremstyle{remark}
\newmdtheoremenv[style=dontsplit]{claim}{Claim}[theorem]

\crefname{claim}{Claim}{Claims}

\newtheorem*{claim*}{Claim}

\newenvironment{proofclaim}[1][]
	{\vspace{-\topsep}\begin{proof}[Proof of the claim] }{\end{proof}}

\graphicspath{{figs/}}

\setlist[enumerate,1]{label=\textup{(\roman*)}}

\makeatletter
\newcommand{\myitem}[1]{%
\item[#1]\protected@edef\@currentlabel{#1}%
}
\makeatother

\hypersetup{
    colorlinks,
    linkcolor={RoyalBlue},
    citecolor={RubineRed},
    urlcolor={blue!80!black},
    pdftitle={On dimension of posets excluding a standard example},
    pdfauthor={Jędrzej Hodor, Piotr Micek, Tom Trotter}
}

\DeclareMathOperator{\idim}{d}
\DeclareMathOperator{\boxi}{box}
\DeclareMathOperator{\mysplit}{split}
\DeclareMathOperator{\PG}{PG}
\newcommand{\har}{\mathrm{H}}

\newcommand{\F}{\mathbb{F}}

\begin{document}

\title[On dimension of posets excluding a standard example]
{On dimension of posets excluding a standard example}

\author[J.~Hodor]{J\k{e}drzej Hodor}
\address[J.~Hodor]{Theoretical Computer Science Department,
Faculty of Mathematics and Computer Science and Doctoral School of Exact
and Natural Sciences, Jagiellonian University, Krak\'ow, Poland}
\email{jedrzej.hodor@gmail.com}

\author[P.~Micek]{Piotr Micek}
\address[P.~Micek]{Theoretical Computer Science Department,
Faculty of Mathematics and Computer Science,
Jagiellonian University, Krak\'ow, Poland}
\email{piotr.micek@uj.edu.pl}

\author[W.~T.~Trotter]{William T. Trotter}
\address[Trotter]{School of Mathematics, Georgia Institute of Technology, Atlanta, Georgia 30332}
\email{trotter@math.gatech.edu}

\thanks{J.~Hodor and P.~Micek were partially supported by the National Science Centre, Poland, under grant no.\ 2022/47/B/ST6/02837}

\date{}

\begin{abstract}
We prove that for every integer $d \geq 3$, there exists $\alpha <1$ such that the maximum dimension of an $n$-element poset containing no standard example of order $d$ as an induced subposet is $\Oh_d(n^\alpha)$. 
For $d=3$, we obtain an upper bound $\Oh(n^{2/3}\log n)$, while we also construct a family of $S_3$-free posets with dimension $\Omega(n^{1/3}/\log n)$.
The proofs were found by OpenAI's ChatGPT-6 Astra.
\end{abstract}

\maketitle

\section{Introduction}\label{sec:introduction}

The \defin{dimension} of a poset $P$, denoted \defin{$\dim(P)$}, is the least positive integer $d$ 
such that
$P$ is isomorphic to a subposet of $\mathbb{R}^d$ equipped with the product order.\footnote{In the product order of $\mathbb{R}^d$, for $(x_1,\dots,x_d),(y_1,\dots,y_d) \in \mathbb{R}^d$, we have $(x_1,\dots,x_d) \leq (y_1,\dots,y_d)$ if and only if $x_i \leq y_i$ for every $i \in [d]$.} 
For an integer $d\ge2$, the \defin{standard example} of 
\defin{order} $d$, denoted by \defin{$S_d$}, consists of
minimal elements $a_1,\ldots,a_d$ and maximal elements $b_1,\ldots,b_d$,
with $a_i<b_j$ for $i,j \in [d]$ if and only if $i\ne j$. 
See \cref{fig:se}.

\begin{figure}[ht]
  \centering
  \includegraphics{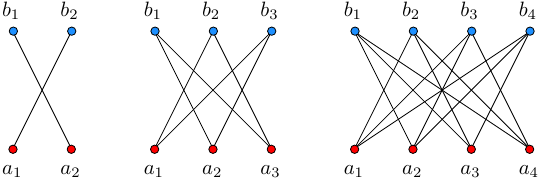}
  \caption{
    The standard examples of order $2$, $3$, and $4$.
    }
   \label{fig:se}
\end{figure}

Dimension is arguably the most widely studied measure of complexity for posets, introduced by Dushnik and Miller~\cite{DM41}.
Standard examples in posets are the canonical structure forcing dimension to be large. 
Indeed, $\dim(S_d)=d$ and since dimension is a monotone parameter, 
if $P$ contains $S_d$ as an induced subposet,  
then $\dim(P)\geq d$. 
However, standard examples are not the only way to drive dimension up; 
there exist infinite families of posets with no large standard examples and arbitrarily large dimension.
Among others: incidence posets of complete graphs (as proved by Dushnik and Miller~\cite{DM41}), interval orders (as proved by Bogart, Rabinovich, and Trotter~\cite{Bogart76}), and adjacency posets of graphs of large girth and large chromatic number (as proved by Felsner and Trotter~\cite{FT00}).

We study dimension of posets that exclude a fixed standard example as an induced subposet.
Accordingly, for an integer $d \geq 2$, we say that a poset $P$ is \defin{$S_d$-free} when $P$ does not contain $S_d$ as an induced subposet.
For all integers $d \geq 2$ and $n \geq 1$, let
\[
\defmath{f(d,n)}=\max\{\dim(P):|P|=n\text{ and }P\text{ is }S_d\text{-free}\}.
\]
Posets that are $S_2$-free are exactly interval orders, as proved by Fishburn~\cite{Fishburn}.
Combining the results of Füredi, Hajnal, Rödl, and Trotter~\cite{FHRT91}, and Hoşten and Morris~\cite{HM} (see detailed explanation in \cite[Section 1.2]{Bir2015}) we obtain that the maximum dimension of an $n$-element interval order is $\log\log n + \frac{1}{2} \log\log\log n + \Oh(1)$.
This gives a very precise estimate of $f(2,n)$.

Bir\'o, Hamburger, and P\'or~\cite{BHP15} proved that for every integer $d \geq 3$, we have $f(d,n) = o(n)$. 
For example, for $d=3$ their proof gives $f(3,n)=\mathcal{O}\left(n\frac{\log\log\log n}{\log\log n}\right)$. 
On the other hand, Bir\'o, Hamburger, Kierstead, P\'or, Trotter, and Wang~\cite[Theorem~6.2]{BHKPTW20} proved that, for every integer $d\ge3$, there exists $\beta_d > 0$ with $\lim_{d\to\infty}\beta_d=1$ such that $f(d,n) = \Omega(n^{\beta_d})$.
We provide a qualitatively matching upper bound.

\begin{theorem}\label{thm:upper}
    For every integer $d\ge3$, there is $\alpha < 1$ such that $f(d,n)\le \Oh( n^{\alpha})$.
\end{theorem}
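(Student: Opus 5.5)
We will bound the dimension by reducing to incomparable pairs $(a,b)$ with $a$ minimal and $b$ maximal. This is the standard split/bipartite reduction, which costs only a constant factor, so we may assume $P$ has height two with minimal elements $A$ and maximal elements $B$. The task is then to bound the number of linear extensions needed to reverse all critical pairs in $\mathrm{Inc}(A,B)$.

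The plan is a density-increment / covering argument. Pick a parameter $t = n^{\gamma}$. Call $a\in A$ \emph{heavy} if it is incomparable to more than $t$ elements of $B$, and light otherwise, and define heavy and light elements of $B$ symmetrically.

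\textbf{Light pairs.} Pairs $(a,b)$ with $a$ light are handled by a known bound. When every element is incomparable to at most $t$ elements on the other side, the dimension is $\Oh(t\log n)$. This follows from the Füredi--Kahn type bound via random linear extensions: the relevant ``degree'' is the incomparability degree, and a Lovász local lemma or union-bound argument gives $\Oh(t\log n)$ extensions.

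\textbf{Heavy pairs.} There are at most $n^2/t$ pairs involving heavy elements on both sides. For these we need the $S_d$-freeness. The key structural claim is that the bipartite incomparability graph $G$ between heavy $A$ and heavy $B$ has a special property. An induced matching of size $d$ in $G$ whose complementary pairs are all comparable is exactly an $S_d$. So the comparability bipartite graph has no induced ``co-matching'' of size $d$. Equivalently, the incomparability graph contains no induced matching of size $d$ in the sense that the other cross pairs are non-edges.

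Graphs with no induced matching of size $d$ have strong structure. By a Ramsey/dependent-random-choice argument, a bipartite graph with minimum degree $t$ and no induced $dK_2$ contains a vertex whose neighbourhood covers a polynomial fraction of the other side. More precisely, we aim to show that some $b$ has common incomparabilities with many $a$'s, so the neighbourhoods must overlap heavily. Using this, we greedily extract ``blocks'' $A'\times B'$ of size polynomially large in which many pairs are incomparable. We then reverse all incomparable pairs of a block using $\Oh(d)$ extensions, by induction on $d$: inside a common neighbourhood, the poset restricted there is $S_{d-1}$-free. Removing each block deletes about $n^{1-\delta}$ pairs' worth of weight. Balancing the number of blocks against $t$ gives $\dim(P)\le \Oh(n^{\alpha})$ with $\alpha<1$ depending on $d$.

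\textbf{Main obstacle.} I expect the main obstacle to be this induction step. The restriction to a common neighbourhood is $S_{d-1}$-free, and one must make sure that the linear extensions built for sub-blocks extend to all of $P$ while respecting comparabilities across blocks. I would first try the standard trick of placing the down-set of $A'$ low and the up-set of $B'$ high in each extension. I would use the recursive bound $f(d-1,\cdot)$ with its exponent $\alpha_{d-1}$, starting the recursion from $f(2,n)=\Oh(\log\log n)$.
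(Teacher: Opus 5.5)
Your outline has genuine gaps. Apart from the reduction to height two, neither of your two main steps holds as stated.

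\textbf{The light-pair bound is false.} In $S_n$ every element is incomparable to exactly one element on the other side, yet $\dim(S_n)=n$. Random-extension arguments of F\"uredi--Kahn type are governed by \emph{comparability} degrees: a uniformly random order on $A$ puts $D_P(b)$ below $a$ with probability $1/(|D_P(b)|+1)$. In any case, your split requires handling all pairs with $a$ light and $b$ arbitrary, which no ``every element is light'' bound covers.

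\textbf{The heavy part is not an argument yet.}
\begin{enumerate}
\item The bound of at most $n^2/t$ heavy--heavy pairs is wrong: every element may be incomparable to most of the other side.
\item The dependent-random-choice claim is never formulated.
\item ``$\Oh(d)$ extensions per block'' cannot hold for $d\ge4$ by \cref{thm:lower}, and you later switch to $f(d-1,\cdot)$ anyway.
\item The $S_{d-1}$-free substructure is not a neighbourhood in your incomparability graph. What is true is \cref{lem:structure-2}: for an incomparable pair $(u,v)$, the set $D_P(v)\cup U_P(u)$ of elements below $v$ or above $u$ is $S_{d-1}$-free. By contrast, take the $S_d$-free disjoint union of $S_{d-1}$ with a chain $a_0<b_0$. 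There the incomparability neighbourhoods of $b_0$ and $a_0$ together induce $S_{d-1}$.
\end{enumerate}

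\textbf{The obstacle you single out is not the real one.} In height two, any linear order on $\Min(P)$ completes to a linear extension by inserting each maximal $b$ as low as possible. Then $(a,b)$ is reversed iff $D_P(b)<a$ (\cref{lem:dimension}). So only orders on $A$ matter, and gluing across blocks is automatic once $\Min(P)\setminus X$ is placed below the block $X$.

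The real difficulty is that polynomially many blocks, each needing polynomially many orders, add up to more than $n$ unless something is saved. The paper's saving comes from redundancy plus a fractional relaxation:
\begin{enumerate}
\item It peels $A$ into sets $X_i=D_P(b_i)\cap A_i$. Here $b_i$ minimizes $|D_P(b)\cap A_i|$ among those $b$ with at least $t$ down-neighbours in $A_i$.
\item Pairs with fewer than $k=d(t-1)$ elements of $D_P(b)$ above $a$ in a fixed order cost $\Oh(dt)$ by random marking (\cref{lem:prefix}). The minimality of $b_i$, together with \cref{lem:structure-1}, puts all ``transition'' pairs into this class.
\item Every remaining pair $(a,b)$ with $a\in X_i$ has at least $t$ witnesses $u\in D_P(b)\cap A_{i+1}$. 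Each such $u$ is incomparable to $b_i$, so induction applies to $X_i\cup(U_P(u)\cap B)$ for each $u$.
\item Averaging these $|A_{i+1}|$ fractional solutions with weight $1/t$ divides the cost by $t$. This yields $\idim^*=\Oh(t+|A|^2t^{\alpha_{d-1}-2})$, and the $\log n$ loss from rounding (\cref{lem:round}) is paid only once at the end.
\end{enumerate}
Your sketch has no counterpart of this averaging over $t$ redundant witnesses. Without it, summing per-block costs does not give an $o(n)$ bound.
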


Since for every fixed integer $d \geq 3$, the function $f(d,n)$ grows polynomially, the next natural problem is to estimate the exponent in this polynomial growth.
For every integer $d \geq 3$, we define
\[\defmath{\gamma_d} = \inf\{\gamma \in [0,1] : \text{$f(d,n) < C \cdot n^\gamma$ for every integer $n \geq 1$ and some real constant $C$}\}.\]

The proof of \cref{thm:upper} gives that 
$\gamma_d \leq 1 - \frac{1}{2^{d-1}-1}$ for every integer $d \geq 3$.
Bir\'o et al.~\cite{BHKPTW20} used random posets of height two, followed by deleting elements to eliminate standard examples to show that $\gamma_d \geq 1 - \frac{2d-1}{d(d-1)}$ for every integer $d \geq 3$. 
Using well-known algebraic constructions, we slightly improve the lower bound.

\begin{theorem}\label{thm:lower}
We have
\[\gamma_3 \geq \frac{1}{3} \ \ \text{ and } \ \ \gamma_d \geq 1 - \frac{1}{d-2} \text{ for every integer $d \geq 4$}.\]
\end{theorem}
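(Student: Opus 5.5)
The plan is to build height-two posets from bipartite incidence structures with strong algebraic regularity. Given a bipartite graph $G$ with parts $A$ (minimal elements) and $B$ (maximal elements), let $P_G$ be the poset with $a<b$ iff $ab\in E(G)$ (or iff $ab\notin E(G)$, whichever is needed). In a height-two poset a copy of $S_d$ is an induced crown $a_1,\dots,a_d,b_1,\dots,b_d$ with $a_i<b_j$ iff $i\neq j$. So $S_d$-freeness becomes a forbidden configuration in $G$, which algebraic constructions handle well. For the lower bound on $\dim(P_G)$ I would use the standard counting lemma, proved first. Every incomparable pair $(a,b)$ with $a\in A$, $b\in B$ must be reversed in some linear extension. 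A set $R$ of such pairs is reversible in a single extension iff it contains no alternating cycle; in particular it contains no two pairs $(a,b),(a',b')$ with $a<b'$ and $a'<b$. Hence $\dim(P_G)\ge |I|/\rho$, where $I$ is the set of incomparable min–max pairs and $\rho$ is the maximum size of a reversible set. So the task is to bound $\rho$ via the geometry.

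\textbf{Case $d=3$.} Under the incidence order a copy of $S_3$ contains the $6$-cycle $a_1b_3a_2b_1a_3b_2a_1$ in $G$. So any $G$ of girth at least $8$ gives an $S_3$-free poset. I would take the point–line incidence graph of a generalized quadrangle of order $q$, say $W(q)$. It has $\Theta(q^3)$ points and lines, each incident with $q+1$ of the other type, and it has girth $8$. Then $n=\Theta(q^3)$, and the goal is $\dim(P_G)=\Omega(q/\log q)$, i.e.\ $\Omega(n^{1/3}/\log n)$, matching the abstract. Here $|I|=\Theta(q^6)$, so I need $\rho=O(q^5\log q)$.

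To bound $\rho$, fix a linear extension $L$ and read its elements left to right. A pair $(a,b)$ is reversed iff $b$ precedes $a$, while every line through $a$ comes after $a$. I would use the strong two-point/two-line structure of the quadrangle: two collinear points share exactly one line, and a non-incident point–line pair has a unique ``projection''. These should show that the set of lines placed before a point $a$ cannot be much larger than the points placed before the lines through $a$. If this does not go through cleanly, the fallback is a probabilistic or entropy argument over a random restriction to a $\log$-sized substructure, which is where the $\log$ loss would come from.

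\textbf{Case $d\ge4$.} To beat the random bound $1-\frac{2d-1}{d(d-1)}$ I would use the complement order, $a<b$ iff $ab\notin E(G)$. Then an $S_d$ corresponds to an induced matching of size $d$ in $G$ whose endpoints are otherwise totally non-adjacent. Equivalently, we need $d$ pairwise ``independent'' incident pairs. I would take $G$ to be a point–hyperplane (or point–subspace) incidence graph in $\F_q^{d-2}$ or $\PG(d-2,q)$, where any $d-1$ points in general position already lie on a common hyperplane. This should rule out the configuration, with $n\approx q^{d-2}$. Each incident pair is then comparable-free in a structured way, and the counting lemma should give $\dim\ge \Omega(q^{d-3})=\Omega(n^{1-1/(d-2)})$, because a reversible set can only use pairs from a bounded number of hyperplane pencils.

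The main obstacle in both cases is the upper bound on $\rho$, the reversible-set size. The existence of the configurations and the parameter counts are routine; showing that no single linear extension reverses more than a $1/\dim$ fraction of the critical pairs requires exploiting the incidence geometry carefully. I would first attempt it for $d=3$, using only $2$-cycles of alternating pairs. I would bring in longer alternating cycles only if that is insufficient.
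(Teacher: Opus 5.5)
\textbf{The construction for $d\geq4$ fails.} With your complement order ($a<b$ iff $a\notin b$), a copy of $S_s$ is exactly an induced matching of size $s$ in the point--hyperplane incidence graph. Such matchings are large. Take any $q+1$ distinct points of $\PG(r,q)$. Through each of them passes a hyperplane avoiding the other $q$, because those $q$ points lie on at most $q(1+q+\dots+q^{r-2})<1+q+\dots+q^{r-1}$ of the hyperplanes through it. So your poset contains $S_{q+1}$.

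The property you invoke (few points always lie on a common hyperplane) is what rules out standard examples in the \emph{incidence} order $\ell<h$ iff $\ell\subseteq h$. That is the order the paper uses, with $r=d-2$. There, an $S_{r+2}$ would give nonzero $v_i\in\ell_i$ with $v_i\notin h_i\supseteq\mathrm{span}\{v_j:j\neq i\}$, i.e.\ $r+2$ linearly independent vectors in $\F_q^{r+1}$, which is impossible.

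Your dimension estimate (``a reversible set can only use pairs from a bounded number of hyperplane pencils'') is also not an argument. The paper gets $\dim(\PG(r,q))\geq q^{r-1}/(4\har_m)$ from the second singular value $q^{(r-1)/2}$, via the bipartite expander mixing lemma and boxicity (\cref{lem:spectral-lower}).

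\textbf{For $d=3$ the decisive estimate is missing.} Your setup is sound: $W(q)$ is Benson's graph, girth $8$ gives $S_3$-freeness, and $\rho=\Oh(q^5\log q)$ would suffice. But the bound on $\rho$ is the whole content, and you only express hope for it. Local quadrangle axioms applied through alternating $2$-cycles do not capture reversibility. For example, three anti-flags built on an induced matching of size $3$ form a strict alternating $3$-cycle with no alternating $2$-cycle. Nothing in your sketch produces $\Oh(q^5\log q)$.

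What works is global pseudorandomness combined with the nested structure of a linear extension $L$:
\begin{enumerate}
\item List the points as $a_1,\dots,a_m$ in the order of $L$.
\item Every line reversed with $a_j$ precedes $a_j$ in $L$, together with all its points. Hence it is incident with no point of $\{a_j,\dots,a_m\}$.
\item By the expander mixing lemma with $k=q+1$ and $\sigma=\sqrt{2q}$, there are at most $m^2\sigma^2/(k^2(m-j+1))=\Oh(q^5/(m-j+1))$ such lines.
\item Summing the harmonic series over $j$ gives $\rho=\Oh(q^5\log q)$.
\end{enumerate}
This is essentially the paper's spectral route; compare the remark after \cref{lem:spectral-lower}.

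The paper's actual proof for $d=3$ is simpler and needs no spectral input. Every spanning subgraph $H$ of a $C_6$-free bipartite graph gives an $S_3$-free poset $P_H$. Comparing the $2^{M}$ labelled posets with the at most $(N!)^{\dim}$ possible realizers yields some $H$ with $\dim(P_H)\geq M/\log(N!)=\Omega(q/\log q)$ (\cref{lem:counting-lower}).
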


Determining the value of $\gamma_3$ remains an exciting open problem. 
Currently, we only know that $1/3 \leq \gamma_3 \leq 2/3$.

\section{Preliminaries}\label{sec:preliminaries}

All posets in this paper are finite.
For a positive real number $x$, we denote by \defin{$\log x$} the logarithm of $x$ of base $2$ and by \defin{$\ln x$}, the natural logarithm of $x$, i.e.\ of base $e$. 
For a positive integer $d$, we denote by \defin{$[d]$} the set $\{1,\dots,d\}$, and $\defmath{[0]} = \emptyset$.
For a set $A$, we denote by \defin{$\calL_A$} the set of all linear orders on $A$.

All graphs in this paper are simple and undirected.
For a graph $G$, we denote by \defin{$V(G)$} its set of vertices and by \defin{$E(G)$} its set of edges.
A \defin{complement} of a graph $G$ is the graph $\defmath{\overline{G}} = (V(G),\binom{V(G)}{2}\setminus E(G))$.
A graph $G$ is \defin{$k$-regular} for a positive integer $k$ if every vertex of $G$ is of degree $k$.

Let $P$ be a poset.
The \defin{height} of $P$ is the maximum size of a chain in $P$. 
The set of minimal elements of $P$ is denoted by \defin{$\Min(P)$} and the set of maximal elements of $P$ is denoted by \defin{$\Max(P)$}.
A \defin{min-max pair} in $P$ is a pair of elements $(a,b)$ such that $a \in \Min(P)$ and $b \in \Max(P)$.
An \defin{isolated element} of $P$ is an element incomparable to all other elements in $P$.
Given a subset $X$ of elements of $P$, we denote with \defin{$P[X]$} the subposet of $P$ induced by the elements in $X$.
For an element $x$ of $P$, the \defin{downset} of $x$ in $P$ is the set $\defmath{D_P(x)} = \{y \in P : y < x \text{ in } P\}$, and the \defin{upset} of $x$ in $P$ is the set $\defmath{U_P(x)} = \{y \in P : x < y \text{ in } P\}$.
For two subsets $X$ and $Y$ of the elements of $P$, we write \defin{$X < Y$} in $P$ when $x < y$ in $P$ for all $x \in X$ and $y \in Y$.
When $Y = \{y\}$, we simply write \q{$X < y$ in $P$}.
Given a linear extension $L$ of a subposet of $P$ and an incomparable pair $(a,b)$ in $P$ such that both $a$ and $b$ are in $L$, we say that $(a,b)$ is \defin{reversed} in $L$ if $b < a$ in $L$.

A linear order $L$ on the elements of $P$ is a \defin{linear extension} of $P$ when for all elements $x$ and $y$ of $P$ with $x < y$ in $P$, we have $x < y$ in $L$.
A \defin{realizer} of a poset $P$ is a collection $\calL$ of linear extensions of $P$ such that every incomparable pair in $P$ is reversed in some order in $\calL$.
A standard combinatorial definition of poset dimension (equivalent with the one presented in the introduction) is that the dimension of $P$ is the minimum positive number of linear extensions that form a realizer of $P$.
See \cite{T92} for more details.



\section{The upper bound}

\subsection{Reducing to height two posets: the split of a poset}\label{ssec:splits}
We now discuss an operation that takes an arbitrary poset and outputs a poset of height $2$ with essentially the same dimension and standard example number. 
This operation was first studied by Kimble~\cite{Kim73}. 
The \defin{split} of a poset $P$ is the poset $\defmath{\mysplit(P)}$ obtained from $P$ as follows.
For every element $x$ in $P$, we create two elements of $\mysplit(P)$: 
a lower copy $x^-$ and 
an upper copy $x^+$. 
The relation in $\mysplit(P)$ is given by
\[
x^-<y^+\ \textrm{in $\mysplit(P)$}\quad\Longleftrightarrow\quad x\le y\ \textrm{in $P$},
\]
for all $x$ and $y$ in $P$.
Note that $\mysplit(P)$ has height $2$ and if $P$ has $n$ elements then $\mysplit(P)$ has $2n$ elements.
Also, $\mysplit(P)$ has no isolated elements. 
It turns out that dimension of $\mysplit(P)$ may differ only by one from the dimension of $P$, and furthermore, $\mysplit(P)$ preserves $S_d$-freeness for $d \geq 3$.
The proof of the first fact can be found in the last author's book~\cite[Theorem~5.2]{T92} and the second in a paper of Bir\'o, Hamburger, and P\'or~\cite[Lemma 4.3]{BHP15}.
Both facts admit short and elementary proofs.

\begin{lemma}[\cite{BHP15,T92}]\label{lem:split}
For every poset $P$, we have
\begin{enumerate}
\item $\dim(P)\leq \dim(\mysplit(P))$;
\item \label{item:split-Sd}
if $P$ is $S_d$-free, 
then $\mysplit(P)$ is $S_d$-free, 
for every integer $d\ge3$.
\end{enumerate}
\end{lemma}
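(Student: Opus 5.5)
For (i), I will take a realizer $\{L_1,\dots,L_t\}$ of $\mysplit(P)$ with $t=\dim(\mysplit(P))$ and build linear orders $M_1,\dots,M_t$ on $P$. In $M_i$ I put $x<y$ when $x^+<y^+$ in $L_i$, i.e.\ I order $P$ by the positions of the upper copies.

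\emph{Each $M_i$ is a linear extension.} Suppose $x<y$ in $P$. Then $x^-<y^+$ in $\mysplit(P)$, hence in $L_i$. Could we have $y^+<x^+$ in $L_i$? Since $y\le y$, we have $y^-<y^+$ in $\mysplit(P)$. Since $x<y$, the element $y$ is not below $x$ in $P$, so $y^-$ and $x^+$ are incomparable. This alone does not force anything, so projecting via upper copies is too naive. I will instead place each $x$ at the position of $x^-$, or compare both copies together, and use the cleaner standard argument.

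\emph{The standard argument.} Given incomparable $x,y$ in $P$, the pair $(y^-,x^+)$ is incomparable in $\mysplit(P)$, since $y\not\le x$. Choose $L_i$ with $x^+<y^-$ in $L_i$. Define $M_i$ on $P$ by $x<y$ iff $x^+<y^+$ in $L_i$. If $u<v$ in $P$ and $v^+<u^+$ in $L_i$, use $u^-<u^+$ and $u^-<v^+$ to derive a contradiction. If no contradiction appears, I will modify $L_i$ first: move every minimal element as high as possible, i.e.\ just below its first upper cover. This keeps every linear extension valid and only makes more pairs $(a,b)$ with $a$ minimal be reversed. After this normalization, $u\le v$ gives $U(v^-)\subseteq U(u^-)$, hence $v^-$ sits after $u^-$. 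Comparing through the lower copies, $u^-<v^-$ in $L_i$ then defines a linear extension of $P$. It reverses $(x,y)$, since $x^+<y^-$ implies $x^-<x^+<y^-$. I expect this normalization step to be the main technical point.

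For (ii), suppose $\mysplit(P)$ contains $S_d$ with minimal elements $x_1^-,\dots,x_d^-$ and maximal elements $y_1^+,\dots,y_d^+$. A copy of $S_d$ in a height-2 poset with no isolated elements must use lower copies for its minimals and upper copies for its maximals, because $d\ge2$ forces comparabilities. The conditions are $x_i\le y_j$ exactly when $i\ne j$. If $x_i=y_i$ for some $i$, then $x_i\le y_i$, a contradiction, so $x_i\ne y_i$. The $x_i$ are distinct, and so are the $y_j$. Moreover $x_i\ne y_j$ for $i\ne j$: otherwise $x_i=y_j\le y_k$ for some $k\ne i,j$ (this uses $d\ge3$), and $x_j\le y_j=x_i\le y_k$, which forces $x_j\le y_k$ trivially when $j\ne k$. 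That case is consistent, so I argue differently: from $x_i=y_j$ we get $x_j\le y_j=x_i\le y_i$ (taking $k=i$ is not allowed). Instead take $x_i = y_j$ and pick $k\notin\{i,j\}$. Then $x_k\le y_j = x_i\le y_k$ gives $x_k\le y_k$, a contradiction. So all $2d$ elements are distinct.

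Finally, $x_i\le y_j$ with distinctness gives $x_i<y_j$ for $i\ne j$, and $x_i\not\le y_i$. It remains to show that the $x_i$ form an antichain. If $x_i<x_j$, then $x_i\le x_j\le y_k$ for every $k\ne j$, and also $x_i\le$ the upper element corresponding to $j$, a contradiction unless $i$ is the index that is excluded; adjusting with $k=i$ gives $x_i<y_i$, impossible. The same argument shows the $y_j$ form an antichain. Hence the $x_i,y_j$ induce $S_d$ in $P$.
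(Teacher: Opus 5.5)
The paper does not prove this lemma; it defers to \cite{T92} and \cite{BHP15}. The arguments you finally settle on are the standard elementary ones and are correct in substance. Your normalization in (i) is the same device the paper uses in the proof of \cref{lem:dimension}, with minimal and maximal elements swapped. Two steps need patching, however, and the abandoned attempts should be removed from the write-up (for instance, the chain ``$x_j\le y_j=x_i\le y_i$'' is false as written).

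First, in (i), the inclusion $U(v^-)\subseteq U(u^-)$ for $u\le v$ only shows that the first upper cover of $u^-$ comes no later than that of $v^-$. The two lower copies can land in the same gap. For example, let $P$ be the chain $u<v$ and let $L_i$ be $v^-,u^-,v^+,u^+$, which is a linear extension and even part of a realizer of $\mysplit(P)$. Then both $u^-$ and $v^-$ are pushed just below $v^+$, and keeping their old relative order gives $v<u$ in $M_i$. So you must order the lower copies inside each gap by a fixed linear extension of $P$. This does not hurt the reversal step: $x^-$ lands below $x^+$, and $y^-$ lands in a gap above $x^+$, so $x^-<y^-$ in any case.

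Second, in (ii), to conclude that the $2d$ elements induce $S_d$ you also need $y_i\not<x_i$, which you never check. (The case $y_j<x_i$ with $j\ne i$ is excluded by antisymmetry, since $x_i<y_j$.) It follows from the same trick you used for distinctness: if $y_i<x_i$, pick $k\ne i$ to get $x_k\le y_i<x_i\le y_k$, contradicting $x_k\not\le y_k$.
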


\subsection{Translating to a fractional variant}
Throughout most of this section, we work with posets of height $2$ and no isolated elements.
Let $P$ be such a poset.
We define two auxiliary parameters measuring complexity of a subset of incomparable pairs in $P$.
They correspond to \emph{interval dimension} and \emph{fractional interval dimension}.
Interval dimension was introduced by Trotter and Bogart~\cite{Trotter76} and the fractional version of it was introduced by Biró, Hamburger, Pór, and Trotter~\cite[Section~4]{Bir2015}.

Let $P$ be a poset of height $2$ with no isolated elements. 
We set $A = \Min(P)$.
Let $I$ be a subset of min-max incomparable pairs in $P$.
We define \defin{$\idim(P,I)$} as the minimum positive integer $d$ for which there exist $L_1,\dots,L_d \in \calL_{A}$ such that for every $(a,b) \in I$, there exists $i \in [d]$ with $D_P(b) < a$ in $L_i$. 
Next, we define the fractional variant of the parameter.
Let \defin{$\idim^*(P,I)$} be the minimum\footnote{Note that \q{minimum} here is justified. First, without loss of generality, we may assume that the weight function takes values in $[0,1]$. Then, the feasible set of weight functions can be written as $[0,1]^{\calL_{A}} \cap \bigcap_{(a,b) \in I} \{w : \sum_{L \in \calL_{A}, \ D_P(b) <a \text{ in } L} w(L) \geq 1\} \cap \{w : \sum_{L \in \calL_{A}} w(L) \geq 1\}$, which is an intersection of a compact set and finitely many closed sets, hence a compact set. It follows that the continuous objective $w \mapsto \sum_{L \in \calL_A} w(L)$ attains the minimum.} real number $d \geq 1$ such that there exists a function $w\colon \calL_{A} \to [0,\infty)$ such that 
\begin{enumerate}
    \item $\sum \{w(L) : L \in \calL_A\}  = d$,
    \item $\sum \{w(L) : L \in \calL_A \text{ and } D_P(b)< a \text{ in } L\} \geq 1$ for every $(a,b) \in I$.
\end{enumerate}

Note that by the definitions,
\[\idim^*(P,I) \leq \idim(P,I) \leq |A|.\]

Note also that if $I_1$, $I_2$, and $I$ are subsets of min-max incomparable pairs of $P$ with $I\subseteq I_1\cup I_2$ then
\begin{equation}
\idim^*(P,I) \leq \idim^*(P,I_1) + \idim^*(P,I_2).
\label{eq:d*-union-bound}
\end{equation}

It is well-known that the value $\idim(P,I)$ is closely related to $\dim(P)$ when $I$ is the set of all min-max incomparable pairs in $P$.
See e.g.\ \cite[Proposition~4]{Bir2015}.
We reprove this relation for completeness.

\begin{lemma}\label{lem:dimension}
Let $P$ be a poset of height $2$ with no isolated elements. 
Let $I$ be the set of all min-max incomparable pairs in $P$. 
Then,
\[
\dim(P)\le\idim(P,I)+1.
\]
\end{lemma}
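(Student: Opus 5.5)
Let $d=\idim(P,I)$ and fix $L_1,\dots,L_d\in\calL_A$ witnessing it, where $A=\Min(P)$ and $B=P\setminus A$. Since $P$ has height $2$ and no isolated elements, every element of $B$ is maximal and lies above some minimal element; pairs inside $A$ and pairs inside $B$ are incomparable. The plan is to turn each $L_i$ into a linear extension $M_i$ of $P$ that reverses every pair of $I$ certified by $L_i$. I will then add one further linear extension $M_0$ that handles the remaining incomparable pairs.

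For each $i\in[d]$, build $M_i$ by inserting every $b\in B$ into $L_i$ immediately after the $L_i$-largest element of $D_P(b)$. This set is nonempty because $b$ is not isolated. Ties among elements of $B$ inserted at the same spot are broken arbitrarily. Then $M_i$ is a linear extension, since every $a<b$ in $P$ has $a\in D_P(b)$, which sits at or before the insertion point of $b$. If $(a,b)\in I$ and $D_P(b)<a$ in $L_i$, then the last element of $D_P(b)$ precedes $a$. Since $b$ is inserted right after that element and before any later element of $A$, we get $b<a$ in $M_i$. Hence every pair in $I$ is reversed in some $M_i$.

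The remaining incomparable pairs are of three kinds: pairs $(b,a)$ with $b\in B$, $a\in A$, which need $a<b$ (i.e.\ $a$ placed before $b$); pairs inside $A$; and pairs inside $B$. Let $M_0$ place all of $A$ before all of $B$. This is a linear extension because every comparability goes from $A$ to $B$, and it puts every $a$ before every $b$. For pairs inside $A$, choose the order of $A$ in $M_0$ to be the reverse of its order in $M_1$; then each pair inside $A$ is reversed in $M_0$ or in $M_1$. For pairs inside $B$, order $B$ in $M_0$ as the reverse of its order in $M_1$. Since the elements of $B$ lie above all of $A$ in $M_0$, this imposes no constraint.

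Consequently $\{M_0,M_1,\dots,M_d\}$ is a realizer, which gives $\dim(P)\le d+1$. If $d=0$ is formally excluded by the definition, there is nothing to worry about; the case $I=\emptyset$ is still covered, since $d\ge1$ supplies $M_1$.

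The main point requiring care is the insertion step. It must reverse exactly the certified pairs while respecting the order of $P$, which is why $b$ is placed immediately after $\max_{L_i}D_P(b)$. A second, minor check is that a single extra extension suffices for both pairs inside $A$ and pairs inside $B$; this works because the order of $B$ in $M_0$ is completely free.
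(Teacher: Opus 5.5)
Your proof is correct and follows the paper's argument essentially step for step. Inserting each $b$ immediately after the $L_i$-largest element of $D_P(b)$ is exactly the paper's ``put each $b\in B$ as low as possible'' in $L_i$, and your extra extension $M_0$ (all of $A$ below all of $B$, with both $A$ and $B$ ordered in reverse to $M_1$, using $d\ge 1$) is precisely the paper's $M_0$.
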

\begin{proof}
Let $A = \Min(P)$, $B = \Max(P)$, and $d=\idim(P,I)$.
Fix $L_1,\ldots,L_d\in \calL_A$ witnessing the definition of $\idim(P,I)$. 
For each $i\in[d]$, let $M_i$ be a linear extension of $P$ obtained from $L_i$ by putting each $b\in B$ as low as possible in that linear order. 
Let $(a,b) \in I$. 
By the choice of $L_1,\ldots,L_d$, 
there is $i\in[d]$ such that 
$D_P(b) < a$ in $L_i$. 
Therefore, we have $D_P(b) < b < a$ in $M_i$.
Thus, $(a,b)$ is reversed in $M_i$.

We construct one more linear extension $M_0$ of $P$ as follows: all elements of $A$ are below all elements of $B$ in $M_0$; elements of $A$ are ordered in a reversed order to $M_1$; elements of $B$ are ordered in a reversed order to $M_1$. 
(Note that we need here that $d\geq1$.)
The linear extensions $M_0,M_1,\dots,M_d$ form a realizer of $P$.
\end{proof}

The values $\idim(P,I)$ and $\idim^*(P,I)$ are related in the following way.

\begin{lemma}\label{lem:round}
Let $P$ be a poset of height $2$ with no isolated elements.
Let $I$ be a subset of min-max incomparable pairs in $P$.
If $I \neq \emptyset$, then
\[
\idim(P,I)\le\ceil{\idim^*(P,I) \cdot \max\{1,\ln|I|\}}.
\]
\end{lemma}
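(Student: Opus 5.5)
This is the standard fractional-to-integral rounding argument, done probabilistically as in the Lovász–Stein set-cover bound. Set $d^*=\idim^*(P,I)$ and fix an optimal weight function $w\colon\calL_A\to[0,\infty)$ with total weight $d^*$. Normalise it to a probability distribution $\mu(L)=w(L)/d^*$ on $\calL_A$. Condition (ii) says that for every $(a,b)\in I$ the set of orders reversing $(a,b)$, namely
\[
\calL_{(a,b)}=\{L\in\calL_A : D_P(b)<a \text{ in } L\},
\]
has $\mu$-measure at least $1/d^*$.

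Next set $t=\ceil{d^*\cdot\max\{1,\ln|I|\}}$ and sample $L_1,\dots,L_t$ independently according to $\mu$. For a fixed pair $(a,b)\in I$, the probability that none of the $L_i$ lies in $\calL_{(a,b)}$ is at most
\[
(1-1/d^*)^t\le e^{-t/d^*}\le e^{-\max\{1,\ln|I|\}}.
\]
Summing over all pairs, the expected number of pairs in $I$ not reversed by any $L_i$ is at most $|I|e^{-\max\{1,\ln|I|\}}$.

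The main obstacle is that this union bound gives only expected value at most $1$ when $|I|\ge e$, not strictly less than $1$. So a cruder argument fails, and one must exploit strictness.

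When $d^*>1$ we have $1-1/d^*<e^{-1/d^*}$ strictly, so the expectation is strictly below $|I|e^{-\ln|I|}=1$ in the case $|I|\ge e$. Hence some outcome reverses every pair, and $\idim(P,I)\le t$. The same strict inequality handles $|I|<e$, where the exponent is $1$ and $|I|\le 2<e$ gives expectation below $1$ directly.

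The degenerate case is $d^*=1$. By (ii), for each pair the weight on $\calL_{(a,b)}$ is at least $1$, while the total weight is exactly $1$. So every order in the support of $w$ reverses every pair of $I$, and a single such order witnesses $\idim(P,I)=1\le t$.

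If I wanted to avoid probability, the alternative plan is the greedy set-cover argument. Repeatedly choose the order that reverses the most still-uncovered pairs; averaging against $\mu$ shows that each step covers at least a $1/d^*$ fraction of the remaining pairs. After $t$ steps fewer than $|I|(1-1/d^*)^t\le 1$ pairs remain uncovered, so none remain, with the same strictness care as above. I would present the probabilistic version, treating the case $d^*=1$ separately and tracking the strict inequality carefully.
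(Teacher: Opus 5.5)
Your proof is correct and follows the paper's argument. Like the paper, you normalise the optimal weight function to a probability distribution, sample $\lceil \idim^*(P,I)\max\{1,\ln|I|\}\rceil$ independent orders, and use the strict inequality $1-x<e^{-x}$ to push the union bound strictly below $1$. The separate case $\idim^*(P,I)=1$ is harmless but unnecessary: there $(1-1/\idim^*(P,I))^t=0<e^{-t}$, so strictness still holds, and this is how the paper treats all cases uniformly.
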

\begin{proof}
Let $A = \Min(P)$ and $t = \idim^*(P,I)$.
    There exists a function $w\colon \calL_A \to [0,\infty)$ such that (1) $\sum \{w(L) : L \in \calL_A\} = t$ and (2) $\sum \{ w(L) : L \in \calL_A \text{ and } D_P(b) < a \text{ in } L\}\geq 1$ for every $(a,b) \in I$.
    Now, consider a probability distribution on $\calL_A$ such that the probability of drawing $L \in \calL_A$ is equal to $w(L) / t$.
    By the condition (2), we have that for every $(a,b) \in I$, we have $\mathbb{P}(D_P(b) < a \text{ in a sampled order}) \geq 1/t$.

    Sample $r = \ceil{t \cdot \max\{1,\ln|I|\}}$ independent orders $\calL_A'$ from $\calL_A$ with the given probability distribution. 
    Note that $r\geq1$. 
    The probability that there exists $(a,b) \in I$ with $D_P(b) \not< a$ in $L$ for every $L \in \calL_A'$ is at most 
    \[
    |I| \cdot (1-1/t)^r < |I|\cdot e^{-r/t}
    \le \frac{|I|}{\max\{e,|I|\}} \leq 1.
    \]
 Thus, with non-zero probability, $\calL_A'$ witnesses that $\idim(P,I) \leq r$. 
\end{proof}

\begin{lemma}\label{lem:prefix}
Let $P$ be a poset of height $2$ with no isolated elements. 
Let $k$ be a positive integer.
Set $A = \Min(P)$ and $B = \Max(P)$.
Let $L_0 \in\calL_A$. 
Let $J$ be a set of min-max incomparable pairs in $P$ such that
if $(a,b)\in J$ then $|\set{a'\in A :\ \textrm{$a'<b$ in $P$ and $a<a'$ in $L_0$}}|< k$.
%
Then,
\[
\idim^*(P,J) \leq e\cdot k.
\]
\end{lemma}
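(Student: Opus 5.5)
We construct a fractional solution explicitly by randomizing $L_0$. For $(a,b)\in J$ we need a linear order on $A$ in which every element of $D_P(b)$ lies below $a$. Let $S_b = \{a'\in A : a'<b \text{ in } P \text{ and } a<a' \text{ in } L_0\}$ be the set of \q{bad} elements for the pair $(a,b)$. These are the elements of $D_P(b)$ that $L_0$ places above $a$. By hypothesis $|S_b|<k$, so $|S_b|\le k-1$. Every other element of $D_P(b)$ is already below $a$ in $L_0$, because $a\notin D_P(b)$ when $(a,b)$ is incomparable.

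The plan is to build a random linear order $L$ on $A$ as follows. Sample a random subset $X\subseteq A$ by including each element independently with probability $p$, and let $L$ be $L_0$ restricted to $X$, placed entirely below $L_0$ restricted to $A\setminus X$. Then $D_P(b)<a$ in $L$ holds whenever $a\notin X$ and $S_b\subseteq X$. To check this, take $a'\in D_P(b)$. If $a'\in S_b$, then $a'\in X$ and $a\notin X$, so $a'<a$. If $a'\notin S_b$, then $a'<a$ in $L_0$. If such an $a'$ lies in $X$ it is below $a$ anyway; if it lies outside $X$, the relative order from $L_0$ is kept, so again $a'<a$.

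This gives
\[
\mathbb{P}(D_P(b)<a \text{ in } L)\ \ge\ (1-p)\,p^{|S_b|}\ \ge\ (1-p)\,p^{k-1}.
\]
Take $p=1-1/k$. Then $(1-p)p^{k-1}=\frac1k\,(1-1/k)^{k-1}\ge \frac{1}{ek}$, using $(1-1/k)^{k-1}\ge 1/e$. When $k=1$ we have $p=0$ and the bound is $1$ (with $0^0=1$), which is still fine.

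Now define $w(L)=ek\cdot\mathbb{P}(\text{the random order equals }L)$. Its total weight is $ek\ge1$, and for every $(a,b)\in J$ the weight of orders with $D_P(b)<a$ is at least $ek\cdot\frac{1}{ek}=1$. Hence $\idim^*(P,J)\le ek$.

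The main point to get right is the reduction to the event $\{a\notin X\}\cap\{S_b\subseteq X\}$. In particular, one must check that the elements of $D_P(b)$ that are not bad never jump above $a$; this holds because $X$ is moved down as a block and the order within each block follows $L_0$. The rest is the elementary inequality $(1-1/k)^{k-1}\ge 1/e$.
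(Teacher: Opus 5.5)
Your proposal is correct and is essentially the paper's proof: your random set $X$, included with probability $1-1/k$ and placed as a block below the rest, is exactly the complement of the paper's ``marked'' set, which is chosen with probability $1/k$ and placed on top. Both yield the bound $\frac1k(1-\frac1k)^{k-1}\ge\frac1{ek}$ and the same weight function. The only cosmetic difference is that you handle $k=1$ uniformly via $0^0=1$, whereas the paper treats it separately with $w(L_0)=1$.
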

\begin{proof}
First, suppose that $k=1$. 
Then $D_P(b) < a$ in $L_0$ for each $(a,b)\in J$. 
Let $w:\calL_A\to[0,\infty)$ such that 
$w(L_0)=1$ and $w(L)=0$ for all $L\neq L_0$. 
Note that $w$ witnesses that $\idim^*(P,J)=1$. 
Now suppose that $k\geq2$. 
Independently mark each $a \in A$ with probability $p=1/k$.
Consider a linear order $L_0'$ on $A$ defined in the following way.
All unmarked elements of $A$ precede all marked elements of $A$, the linear order on unmarked elements is inherited from $L_0$, and 
the linear order on marked elements is also inherited from $L_0$.

Consider $(a,b) \in J$.
Let $X_{(a,b)} = \{a' \in A:\textrm{$a'<b$ in $P$ and $a<a'$ in $L_0$}\}$.
Note that if element $a$ is marked and all the elements of $X_{(a,b)}$ are unmarked, then $D_P(b) < a$ in $L_0'$.
Thus, since $|X_{(a,b)}| < k$, we have
\begin{align}\label{eq:prob:L0'}
\mathbb{P}(D_P(b) < a \text{ in } L_0') \geq p(1-p)^{|X_{(a,b)}|} \geq \frac{1}{k} \cdot \left(1-\frac{1}{k}\right)^{k-1} \geq \frac{1}{ke}.
\end{align}
Recall that $L_0'$ is a random variable.
For every $L \in \calL_A$, we set $w(L) = \mathbb{P}(L_0' = L) \cdot ek$.
Then, $\sum \{ w(L) : L \in \calL_A\} = ek$ and by \cref{eq:prob:L0'}, for every $(a,b) \in J$, $\sum\{ w(L) : L \in \calL_A \text{ and } D_P(b) < a \text{ in } L \} \geq 1$.
It follows that $\idim^*(P,J) \leq ek$.
\end{proof}


\subsection{Excluding a standard example}

In this subsection, we present two basic facts on $S_d$-free posets of height $2$.

\begin{lemma}\label{lem:structure-1}
Let $P$ be a poset of height $2$ with no isolated elements.
Let $X \subset \Min(P)$ and $Y \subset \Max(P)$.
Suppose that $P[X \cup Y]$ is $S_d$-free for some integer $d \geq 2$.
Then there exists $Y'\subseteq Y$ with $|Y'|<d$ such that 
\[\bigcup_{b \in Y} X \setminus D_P(b) = \bigcup_{b \in Y'} X \setminus D_P(b).\]
\end{lemma}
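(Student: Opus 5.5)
Let $U = \bigcup_{b \in Y} X \setminus D_P(b)$, the set of elements of $X$ that are incomparable to at least one element of $Y$ (in height-two $P$, for $a\in X\subseteq\Min(P)$ and $b\in Y\subseteq\Max(P)$, $a\notin D_P(b)$ exactly when $a$ and $b$ are incomparable, or $a=b$; the latter cannot occur since $P$ has no isolated elements, so $\Min(P)\cap\Max(P)=\emptyset$). The plan is greedy. Choose $Y'\subseteq Y$ minimal (by inclusion) with $\bigcup_{b\in Y'} X\setminus D_P(b) = U$; such a set exists because $Y$ itself works. Then show that $|Y'|\ge d$ yields an induced $S_d$ in $P[X\cup Y]$, a contradiction.

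By minimality, each $b\in Y'$ has a private element: there is $a_b\in X\setminus D_P(b)$ with $a_b\in D_P(b')$ for every $b'\in Y'\setminus\{b\}$. So $a_b$ is incomparable to $b$ and $a_b<b'$ for all other $b'\in Y'$. The elements $a_b$ are pairwise distinct, since $a_b$ is incomparable to $b$ but $a_{b'}<b$ for $b'\neq b$. Now suppose $|Y'|\ge d$ and take $d$ elements $b_1,\dots,b_d$ of $Y'$ with $a_i=a_{b_i}$. Then $a_i<b_j$ if and only if $i\neq j$.

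It remains to check that $\{a_1,\dots,a_d,b_1,\dots,b_d\}$ induces exactly $S_d$. The $a_i$ lie in $\Min(P)$ and the $b_j$ lie in $\Max(P)$, and all $2d$ elements are distinct because minimal and maximal sets are disjoint. There are no comparabilities among the $a_i$, since they are distinct minimal elements and hence an antichain; likewise the $b_j$ form an antichain. No relation $b_j<a_i$ holds, because $a_i$ is minimal and distinct from $b_j$. Hence the induced subposet is $S_d$, contradicting $S_d$-freeness of $P[X\cup Y]$. Therefore $|Y'|<d$.

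The only delicate point is this last verification, together with the distinctness of the chosen elements. Both rest on height $2$ and the absence of isolated elements, which guarantee that minimal and maximal elements are disjoint antichains. The rest is a routine minimality argument. If $U=\emptyset$, then $Y'=\emptyset$ works trivially, and the minimal-$Y'$ argument covers this case as well.
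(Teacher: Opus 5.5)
Your proof is correct and follows essentially the same argument as the paper: take an inclusion-minimal $Y'$, extract a private element $a_b$ for each $b \in Y'$, and observe that these elements together with $Y'$ induce a standard example of order $|Y'|$, which forces $|Y'|<d$. Your extra checks (distinctness of the $a_b$, and that minimal and maximal elements are disjoint antichains because there are no isolated elements) are details the paper leaves implicit.
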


\begin{proof}
Let $Y' \subset Y$ be inclusion-wise minimal such that
$\bigcup_{b \in Y} X \setminus D_P(b) = \bigcup_{b \in Y'} X \setminus D_P(b)$.
We claim that $|Y'| < d$.
The minimality implies that for every $b \in Y'$, there exists $x_b \in X$ such that 
$(x_b,b)$ is an incomparable pair in $P$ and
$x_b < y'$ in $P$ for all $y'\in Y'\setminus\set{b}$. 
Therefore, if $|Y'| \geq 2$, then
the set $\{x_b : b \in Y'\}\cup Y'$ induces a standard example of order $|Y'|$ in $P$. 
Thus, since $P[X\cup Y]$ is $S_d$-free, we obtain $|Y'| < d$, as claimed.
\end{proof}

\begin{lemma}\label{lem:structure-2}
Let $P$ be a poset of height $2$. 
Let $(u,v)$ be a min-max incomparable pair in $P$. 
Suppose that $P$ is $S_d$-free for some integer $d \geq 3$.
Then $P[D_P(v)\cup U_P(u)]$ is $S_{d-1}$-free.
\end{lemma}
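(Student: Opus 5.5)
The plan is a direct extension argument. Suppose, for contradiction, that $P[D_P(v)\cup U_P(u)]$ contains an induced copy of $S_{d-1}$, with minimal elements $a_1,\dots,a_{d-1}$ and maximal elements $b_1,\dots,b_{d-1}$. Since $P$ has height $2$, we have $D_P(v)\subseteq \Min(P)$ and $U_P(u)\subseteq \Max(P)$. Also, $D_P(v)\cap U_P(u)=\emptyset$, since an element in both would lie strictly above $u$ and strictly below $v$. Because $d-1\geq 2$, every element of the copy is comparable to some other element of the copy. Hence each $a_i$ lies below some $b_j$, so $a_i$ is not maximal in $P$ and therefore $a_i\in D_P(v)$. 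Symmetrically, each $b_j$ lies above some $a_i$, so $b_j\in U_P(u)$.

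Next we set $a_d=u$ and $b_d=v$ and check that $\{a_1,\dots,a_d,b_1,\dots,b_d\}$ induces $S_d$ in $P$. The required relations are as follows.
\begin{enumerate}
\item For $i<d$, we have $a_i<b_d=v$, since $a_i\in D_P(v)$.
\item For $j<d$, we have $a_d=u<b_j$, since $b_j\in U_P(u)$.
\item The pair $(u,v)$ is incomparable by hypothesis.
\item The relations among $a_1,\dots,a_{d-1},b_1,\dots,b_{d-1}$ are those of $S_{d-1}$, because the copy is induced.
\end{enumerate}

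It remains to confirm that no extra comparabilities occur and that the $2d$ elements are distinct. The elements $a_i$ ($i<d$) are minimal and $u$ is minimal, so no two minimal elements are comparable; the same holds for the maximal ones, using that $v$ is maximal. For distinctness, $u\neq a_i$ because $a_i<v$ while $u\not<v$, and $v\neq b_j$ because $u<b_j$ while $u\not<v$. We should also check that $u$ and $v$ are not themselves among the maximal or minimal elements of the copy, respectively; this follows from the disjointness of $\Min(P)$ and $\Max(P)$ when $P$ has no isolated elements. In any case, $u\notin U_P(u)$ and $v\notin D_P(v)$ trivially, so this step causes no trouble. Thus $P$ contains an induced $S_d$, a contradiction.

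The only real subtlety is the placement step: showing that each element of the $S_{d-1}$ copy lands on the correct side (the $a_i$ in $D_P(v)$ and the $b_j$ in $U_P(u)$). This is exactly where $d\geq 3$ is used, via the fact that $S_{d-1}$ with $d-1\geq 2$ has no isolated elements. Everything else is routine verification.
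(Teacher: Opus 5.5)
Your proposal is correct and follows the paper's argument: the paper simply observes that adding $u$ and $v$ to any induced $S_k$ inside $D_P(v)\cup U_P(u)$ yields an induced $S_{k+1}$. Your placement step, which uses $d-1\ge 2$ and height $2$ to put the minimal elements in $D_P(v)$ and the maximal ones in $U_P(u)$, makes explicit what the paper leaves implicit. One aside should be dropped: you invoke \emph{no isolated elements}, which is not a hypothesis of this lemma. It is not needed, since your fallback reason ($u\notin U_P(u)$ and $v\notin D_P(v)$) already settles distinctness.
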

\begin{proof}
Note that for every $S \subset D_P(v) \cup U_P(u)$ that induces a standard example of order $k$ in $P$, the set $S \cup \{u,v\}$ induces a standard example of order $k+1$ in $P$; see \cref{fig:build-sd}.
\end{proof}

\begin{figure}[tp]
  \centering
  \includegraphics{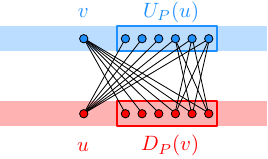}
  \caption{
        An illustration of the proof of \cref{lem:structure-2}.
    }
   \label{fig:build-sd}
\end{figure}

\subsection{Constructing a realizer}

We are ready to state and prove our main technical statement.

\begin{lemma}\label{lem:ind}
Let $d$ be an integer with $d\geq2$. 
Let $P$ be a poset of height $2$ with no isolated elements. 
Let $A \subset \Min(P)$ and $B \subset \Max(P)$.
Let $I$ be a set of incomparable pairs in $P$ with $I \subset A \times B$.
Suppose that $P[A \cup B]$ is $S_d$-free. 
If $A \neq \emptyset$, then
\[
\idim^*(P,I)\leq4ed\cdot |A|^{\alpha_d}
\quad\text{where}\quad
\alpha_d=1-\frac{1}{2^{d-1}-1}.
\]
\end{lemma}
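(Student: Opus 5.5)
We argue by induction on $d$. Throughout, every linear order we use lists $\Min(P)\setminus A$ first, in a fixed order. With this convention a pair $(a,b)\in I$ is reversed exactly when $D_P(b)\cap A<a$, so only the order on $A$ matters. Write $n=|A|$.

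\textbf{Base case $d=2$.} Here $\alpha_2=0$. Since $P[A\cup B]$ is $S_2$-free, the traces $D_P(b)\cap A$ for $b\in B$ form a chain under inclusion. Take $L_0$ to order $A$ so that every such trace is an initial segment. Then every $(a,b)\in I$ satisfies $D_P(b)\cap A<a$ in $L_0$, so $\idim^*(P,I)=1\le 8e$.

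\textbf{Inductive step $d\ge3$.} Write $\beta=\alpha_{d-1}$. The exponents satisfy $1/(1-\alpha_d)=2/(1-\beta)+1$; the plan is designed so that the balancing of parameters produces exactly this recursion. Fix a threshold $k=\lceil n^{\alpha_d}\rceil$ and an order $L_0$ of $A$, to be chosen. We split $I$ into two parts:
\begin{itemize}
\item $J$, the pairs $(a,b)$ with fewer than $k$ elements of $D_P(b)\cap A$ above $a$ in $L_0$;
\item $I\setminus J$, the remaining ``hard'' pairs.
\end{itemize}
\cref{lem:prefix} (applied with $\Min(P)\setminus A$ placed at the bottom) gives $\idim^*(P,J)\le ek$. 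By \eqref{eq:d*-union-bound}, it remains to bound $\idim^*(P,I\setminus J)$ by roughly $(4e(d-1)+\dots)\,n^{\alpha_d}$.

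\textbf{Covering the hard pairs.} We cover $I\setminus J$ by a bounded family of instances of the lemma for $d-1$. For an incomparable pair $(u,v)$, \cref{lem:structure-2} says that $P[(D_P(v)\cap A)\cup(U_P(u)\cap B)]$ is $S_{d-1}$-free. By induction, the pairs inside it cost at most $4e(d-1)\,|D_P(v)\cap A|^{\beta}$. The order $L_0$ is built greedily, blocks from the top down. While some $b$ still has a large trace among the unplaced elements of $A$, choose such a $v$. \cref{lem:structure-1} then lets us select fewer than $d$ elements $u$ witnessing all non-relations into $D_P(v)$, and the block $D_P(v)\cap A$ is placed next. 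A hard pair $(a,b)$ has $k$ elements of $D_P(b)$ above $a$, so these elements lie in earlier blocks. This should force $(a,b)$ into one of the $S_{d-1}$-free instances attached to an earlier block: either $a\in D_P(v)$ and $b\in U_P(u)$ for the chosen $(u,v)$, or we are in a situation treated by a symmetric second step with the roles of minima and maxima exchanged. Summing the inductive bounds over at most about $n/k$ blocks of size $m_j$, where the block sizes satisfy $\sum_j m_j\le n$, and using concavity of $x\mapsto x^\beta$, gives a total of
\[
4e(d-1)\, d\,(n/k)^{1-\beta}\, n^{\beta}.
\]
With $k=n^{\alpha_d}$, this becomes of order $n^{\alpha_d}$ exactly when the exponents satisfy the recursion above, possibly after two balancing levels. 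The ``two'' in $e_d=2e_{d-1}+1$ suggests that two such levels are needed, one for each side of the pair. Finally, collect the constants so that $e+4e(d-1)\le 4ed$.

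\textbf{Main obstacle.} The hardest step is the covering claim: that every hard pair lands inside some $S_{d-1}$-free instance $D_P(v)\cup U_P(u)$ for a pair $(u,v)$ chosen in the construction, with the instances overlapping only boundedly. I would first try to prove it with blocks defined by the traces of maxima that have at least $k$ unplaced predecessors. If a pair $(a,b)$ escapes coverage, I would use \cref{lem:structure-1} to extract either a large standard example, which is excluded, or a contradiction with the greedy choice.
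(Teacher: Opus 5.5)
\textbf{There is a genuine gap: the covering of the hard pairs.} You flag this step yourself as the main obstacle and leave it open. The base case and the bound $\idim^*(P,J)\le ek$ are fine. The problem is what the covering step would require.

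For a hard pair $(a,b)$ to lie in an instance $D_P(v)\cup U_P(u)$ you need $u<b$. So your ``fewer than $d$ elements $u$'' per block would have to form a hitting set for the downsets of all maxima occurring in hard pairs of that block. \cref{lem:structure-1} does not provide this. It selects fewer than $d$ \emph{maxima} whose non-neighbourhoods inside a fixed set of minima cover all such non-neighbourhoods. That is a statement about covering non-relations, not about hitting relations, and its dual covers non-relations on the other side. Nor does $S_d$-freeness help: maxima whose downsets outside the block are pairwise disjoint form an induced matching, not a standard example, so arbitrarily many $u$'s may be needed.

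Your own arithmetic signals the problem. If a block of size $m_j$ really cost only $d\cdot 4e(d-1)m_j^{\beta}$, balancing $k$ against $(n/k)^{1-\beta}n^{\beta}$ would give $\alpha_d=1/(2-\alpha_{d-1})$, i.e.\ $\alpha_d=1-1/(d-1)$. For $d=3$ that is exponent $1/2$ instead of $2/3$, strictly stronger than the statement. The appeal to ``two balancing levels, one for each side'' is reverse-engineered from the target recursion, not derived.

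\textbf{The missing idea is to cover hard pairs fractionally, with multiplicity, rather than by a bounded family.} Fix $t=\lceil 2|A|^{\alpha_d}\rceil$ and build blocks bottom-up: while some $b$ has at least $t$ predecessors in $A_i$, take $X_i=D_P(b_i)\cap A_i$ and $A_{i+1}=A_i\setminus X_i$, with $L_0$ ordering $X_1<X_2<\dots$.
\begin{itemize}
\item Every $u\in A_{i+1}$ is then automatically incomparable to $b_i$. By \cref{lem:structure-2}, each $X_i\cup(U_P(u)\cap B)$ is an $S_{d-1}$-free instance, which gives a weight function $w_{i,u}$ by induction.
\item A pair $(a,b)$ with $a\in X_i$ and $|D_P(b)\cap A_{i+1}|\ge t$ lies in at least $t$ of these $|A_{i+1}|$ instances. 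Hence $\frac1t\sum_{u\in A_{i+1}}w_{i,u}$ covers it, at cost $\frac{|A_{i+1}|}{t}\cdot 4e(d-1)|X_i|^{\alpha_{d-1}}$.
\item Summing over $i$ gives $4e(d-1)|A|^2t^{\alpha_{d-1}-2}$. This factor $|A|^2/t^2$ is what produces $\alpha_d=2/(3-\alpha_{d-1})$.
\end{itemize}

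The remaining pairs, those with $|D_P(b)\cap A_{i+1}|<t\le|D_P(b)\cap A_i|$, have too few witnesses for averaging. They are forced into $J$ as follows:
\begin{itemize}
\item choose $b_i$ of minimum trace, so that $|X_i\setminus D_P(b)|<t$ for each such $b$;
\item bound the union $W_i$ of the sets $X_i\setminus D_P(b)$ over all such $b$ by $(d-1)(t-1)$ via \cref{lem:structure-1} (this is the actual role of that lemma);
\item place $W_i$ at the top of $X_i$, and take $k=d(t-1)$.
\end{itemize}

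Your sketch has neither ingredient. Moreover, with your top-down placement the elements of $D_P(b)$ above $a$ lie in earlier blocks. Those need not be incomparable to the current $v$, so they cannot even serve as the $u$'s.
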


\begin{figure}[tp]
  \centering
  \includegraphics{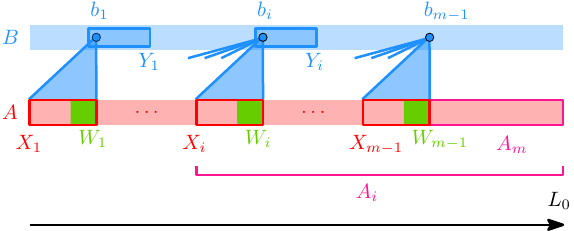}
  \caption{
        An illustration of objects considered in the proof of \cref{lem:ind}.
    }
   \label{fig:proof}
\end{figure}

\begin{proof}
We proceed by induction on $d$.
Assume that $A \neq \emptyset$.
When $d=2$, then since $P[A \cup B]$ is $S_2$-free, the sets $D_P(b) \cap A$ for $b\in B$ are linearly ordered by inclusion. 
Choose an order $L_A$ on $A$ in which every $D_P(b)\cap A$ is an initial segment. 
Let $L_0$ be a linear order on $\Min(P)$ such that $\Min(P)\setminus A <  A$ and $A$ is ordered by $L_A$. 
Note that $L_0$ witnesses that $\idim^*(P,I)=1$, as required.

Next, assume that $d\geq3$.
Set $t=\ceil{2|A|^{\alpha_d}}$. 
We construct a sequence $(A_1,\dots,A_m)$ of subsets of $A$ as follows.
Let $A_1=A$.
Suppose that we have already constructed $A_i$ for some positive integer $i$.
If $|D_P(b)\cap A_i|<t$ for every $b\in B$, then we set $m=i$ and end the construction.
Otherwise, among the elements $b\in B$ with $|D_P(b)\cap A_i|\geq t$, choose $b_i$ minimizing $|D_P(b_i)\cap A_i|$ and set
\[
X_i=D_P(b_i)\cap A_i \ \ \text{ and } \ \
A_{i+1}=A_i\setminus X_i.
\]
Since $|X_i|\geq t$ at every step, the construction terminates.
The sets $X_1,\dots,X_{m-1},A_m$ are pairwise disjoint and their union is $A$; the final set $A_m$ may be empty.
See \cref{fig:proof}.

For every $i\in[m-1]$, let
\[
Y_i=\{b\in B:|D_P(b)\cap A_{i+1}|<t\leq |D_P(b)\cap A_i|\}.
\]
For every $i\in[m-1]$ and $b\in Y_i$, the choice of $b_i$ gives
\[|X_i|\leq |D_P(b)\cap A_i|=|D_P(b)\cap X_i|+|D_P(b)\cap A_{i+1}|,\]
and  it follows that
\begin{equation}
|X_i\setminus D_P(b)|
=|X_i|-|D_P(b)\cap X_i|
\leq |D_P(b)\cap A_{i+1}|
<t.
\label{eq:small-Xi-Db}
\end{equation}
For every $i\in[m-1]$, let
\[
W_i=\bigcup_{b\in Y_i}(X_i\setminus D_P(b)).
\]
Since $P[A\cup B]$ is $S_d$-free, by \cref{lem:structure-1}, there exists 
$Y_i'\subseteq Y_i$ with $|Y_i'|\leq d-1$ such that 
$W_i=\bigcup_{b\in Y_i'}(X_i\setminus D_P(b))$. 
This together with~\cref{eq:small-Xi-Db} gives
\begin{equation}
|W_i| \leq (d-1)(t-1).
\label{eq:Wi}
\end{equation}
Let $L_0\in\calL_{\Min(P)}$ be a linear order with
$\Min(P) \setminus A < X_1<\dots<X_{m-1}<A_m$, in which $X_i\setminus W_i<W_i$ for every $i\in[m-1]$.
All remaining choices of order in $L_0$ are arbitrary.
Set $k=d(t-1)$ and define
\begin{align*}
J&=\{(a,b)\in I:|\{x\in D_P(b):a<x\text{ in }L_0\}|< k\},\\
I_i&=\{(a,b)\in I:a\in X_i \text{ and } |D_P(b)\cap A_{i+1}|\geq t\}
\quad\text{for every }i\in[m-1].
\end{align*}
We claim that
\begin{equation}\label{eq:ind-pairs}
I=J\cup\bigcup_{i\in[m-1]}I_i.
\end{equation} 
Obviously, $J\cup\bigcup_{i\in[m-1]}I_i \subset I$. 
In order to prove the opposite containment consider $(a,b)\in I$. 
Suppose first that $a\in A_m$, then every element of $D_P(b)$ after $a$ in $L_0$ belongs to $A_m$.
By the stopping condition, $|D_P(b)\cap A_m|\leq t-1<k$, so $(a,b)\in J$.
Now suppose that $a\in X_i$ for some $i\in[m-1]$.
If $|D_P(b)\cap A_{i+1}|\geq t$, then $(a,b)\in I_i$.
If $|D_P(b)\cap A_i|<t$, then since every element of $D_P(b)$ after $a$ in $L_0$ belongs to $A_i$, we again have that 
$|\set{x\in D_P(b): a<x \textrm{ in $L_0$}}|\leq t-1< k$, 
so again $(a,b)\in J$.
Thus, it remains to deal with the case that $|D_P(b)\cap A_{i+1}|<t\leq |D_P(b)\cap A_{i}|$. In this case $b\in Y_i$ and since $(a,b)$ is an incomparable pair, $a\in W_i$. 
By the choice of $L_0$, every element of $D_P(b)$ after $a$ belongs to $W_i\cup A_{i+1}$.
Thus,
\begin{align*}
|\{x\in D_P(b):a<x\text{ in }L_0\}|
&\leq |D_P(b)\cap (W_i\setminus\{a\}\cup  A_{i+1})|\\
&\leq |W_i \setminus \{a\}|+|D_P(b)\cap A_{i+1}|\\
&<(d-1)(t-1)+(t-1)=k&&\textrm{by~\cref{eq:Wi}.}
\end{align*}
Thus, $(a,b)\in J$.
This completes the proof of \cref{eq:ind-pairs}.

Fix $i\in[m-1]$ and suppose that 
$I_i\neq\emptyset$. 
Automatically, we obtain that $|A_{i+1}|\geq t$. 
For every $u\in A_{i+1}$, let
\[
I_{i,u}=\{(a,b)\in I_i:u<b\text{ in }P\}.
\]
Since $u\in A_{i+1} = A_i \setminus D_P(b_i)$, we have that $(u,b_i)$ is an incomparable pair in $P$. 
Thus, Lemma~\ref{lem:structure-2} gives that 
$P[(D_P(b_i)\cap A)\cup (U_P(u)\cap B)]$ is $S_{d-1}$-free. 
In particular, $P[X_i\cup (U_P(u)\cap B)]$ is $S_{d-1}$-free. 
Apply induction to $X_i$, $U_P(u)\cap B$, and $I_{i,u}$ to obtain 
\begin{align}\label{d*:iu}
    \idim^*(P,I_{i,u})\leq 4e(d-1)|X_i|^{\alpha_{d-1}}.
\end{align}
Let $w_{i,u}\colon \calL_{\Min(P)}\to[0,\infty)$ be a weight function witnessing that, i.e., 
\begin{enumerate}[label=(iu\arabic*)]
    \item \label{item:iu1} $\sum\set{w_{i,u}(L) : L \in \calL_{\Min(P)}} = \idim^*(P,I_{i,u})$ and 
    \item \label{item:iu2} 
$\sum\set{w_{i,u}(L) : L \in \calL_{\Min(P)} \text{ and } D_P(b)< a \text{ in } L} \geq 1$ for each $(a,b) \in I_{i,u}$.
\end{enumerate}
Let \[w_i = \frac{1}{t}\cdot\sum_{u\in A_{i+1}} w_{i,u}.\]
Recall that we assumed $I_i\neq\emptyset$ and therefore we have $|A_{i+1}|\geq t$. 
In particular, we have $\sum\set{w_i(L) \mid L\in\calL_{\Min(P)}}\geq \frac{1}{t}\cdot |A_{i+1}|\geq1$.
We claim that $w_i$ witnesses 
\begin{align}\label{eq:d*-i}
    \idim^*(P,I_i) \leq \frac{1}{t} \cdot |A_{i+1}| \cdot 4e(d-1)|X_i|^{\alpha_{d-1}}.
\end{align}
In other words, we must prove that
\begin{enumerate}[label=(i\arabic*)]
    \item \label{item:i1} $\sum\set{w_{i}(L) : L \in \calL_{\Min(P)}} \leq 1/t \cdot |A_{i+1}| \cdot 4e(d-1)|X_i|^{\alpha_{d-1}}$ and 
    \item \label{item:i2} 
    $\sum\set{w_{i}(L) : L \in \calL_{\Min(P)} \text{ and } D_P(b)< a \text{ in } L} \geq 1$ for each $(a,b) \in I_{i}$.
\end{enumerate}
Item \ref{item:i1} follows, as
\begin{align*}
\sum\set{w_i(L): L\in\calL_{\Min(P)}} 
&= \frac{1}{t}\cdot\sum_{u\in A_{i+1}}\sum\set{w_{i,u}(L): L\in\calL_{\Min(P)}}\\
&= \frac{1}{t}\cdot\sum_{u\in A_{i+1}}\idim^*(P,I_{i,u})&&\textrm{by \ref{item:iu1}}\\
&\leq \frac{1}{t}\cdot |A_{i+1}|\cdot 4e(d-1)|X_i|^{\alpha_{d-1}}&&\textrm{by \cref{d*:iu}.}
\end{align*}
We now prove \ref{item:i2}.
Recall that for each $(a,b)\in I_i$, 
we have $|D_P(b)\cap A_{i+1}|\geq t$. Thus, for at least $t$ choices of $u\in A_{i+1}$ we have $(a,b)\in I_{i,u}$. Therefore, for each $(a,b)\in I_{i}$
\begin{align*}
&\sum\set{w_{i}(L) : L\in\calL_{\Min(P)} \text{ and } D_P(b) < a 
\text{ in } L}\\
&\quad= \frac{1}{t}\sum_{u\in A_{i+1}}\sum\set{w_{i,u}(L) : L\in\calL_{\Min(P)} \text{ and } D_P(b) < a 
\text{ in } L}\\
&\quad\geq \frac{1}{t}\sum_{u\in D_P(b)\cap A_{i+1}}\sum\set{w_{i,u}(L): L\in\calL_{\Min(P)} \text{ and } D_P(b) < a 
\text{ in } L}\\
&\quad\geq \frac{1}{t}\sum_{u\in D_P(b)\cap A_{i+1}} 1  \ \geq \frac{1}{t}\cdot |D_P(b)\cap A_{i+1}|\geq1&& \text{ by \ref{item:iu2}.}
\end{align*}

Since $\alpha_{d-1}<1$, $|X_i|\geq t$, and the sets $X_i$ are pairwise disjoint, we have
\begin{align*}
    \sum_{i\in[m-1]}|A_{i+1}| |X_i|^{\alpha_{d-1}}
    &\leq |A| \sum_{i\in[m-1]}(|X_i|^{\alpha_{d-1}-1}\cdot |X_i|)\\
    &\leq |A| t^{\alpha_{d-1}-1} \cdot \sum_{i\in[m-1]}|X_i|\\
    &\leq |A|^2t^{\alpha_{d-1}-1}.
\end{align*}

Recall that by~\cref{eq:ind-pairs} we have $I=J\cup\bigcup_{i\in[m-1]}I_i$. 
By \cref{lem:prefix}, we have $\idim^*(P,J)\leq ek$.
Altogether, we obtain
\begin{align*}
\idim^*(P,I)
&\leq \idim^*(P,J) + \sum_{i\in[m-1] \text{ and } I_i \neq \emptyset} \idim^*(P,I_i) && \text{by \cref{eq:d*-union-bound}}\\
&\leq ek + \sum_{i\in[m-1]} \frac{|A_{i+1}|}{t}\cdot 4e(d-1)|X_i|^{\alpha_{d-1}}&& \text{by \cref{lem:prefix} and \cref{eq:d*-i}}\\
&\leq e\cdot d(t-1) + 4e(d-1) |A|^2t^{\alpha_{d-1}-2}.
\end{align*}
The choice of $\alpha_d$ is supposed to balance out the two terms: $t$ and $|A|^2 t^{\alpha_{d-1} - 2}$ in the above estimate.
Note that they are supposed to be roughly equal to $|A|^{\alpha_d}$.
It follows that $\alpha_d = 2/(3-\alpha_{d-1})$.
Recall that $\alpha_2 = 0$.
This simple recurrence relation can be solved to $\alpha_d = 1 - 1 / (2^{d-1}-1)$ as in the statement of the lemma.

By the definition of $t$, we have $2|A|^{\alpha_d}\leq t<2|A|^{\alpha_d}+1$.
Consequently,
\[
e\cdot d(t-1)\leq e\cdot 2d|A|^{\alpha_d}.
\]
Moreover, $\alpha_d(3-\alpha_{d-1})=2$ and $\alpha_{d-1}<1$, so
\[
4e(d-1)|A|^2t^{\alpha_{d-1}-2}
\leq4e(d-1)2^{\alpha_{d-1}-2}|A|^{2+\alpha_d(\alpha_{d-1}-2)}
\leq2ed|A|^{\alpha_d}.
\]
We used the fact that $2^{\alpha_{d-1}-1} < 1$.
Substituting these estimates, we obtain
\[
\idim^*(P,I)\leq 4ed\,|A|^{\alpha_d}.
\]
This completes the proof.
\end{proof}

\begin{proof}[Proof of \cref{thm:upper}]
Let $d \geq 3$ and $n \geq 2$ be integers.
Let $P$ be an $n$-element $S_d$-free poset and let $Q=\mysplit(P)$.
Note that $Q$ is of height $2$ and has no isolated elements. 
By \cref{lem:split}, the poset $Q$ is $S_d$-free.
Let $I$ be the set of all min-max incomparable pairs in $Q$. 
If $I=\emptyset$, then $\dim(P)\leq\dim(Q)\leq\idim(Q,I)+1=2$. 
Thus, assume that $I\neq\emptyset$. 
Observe that $|\Min(Q)| = n$ and $|I| \leq n^2$.
The application of \cref{lem:ind} below is to $Q$, $I$, $\Min(Q)$, and $\Max(Q)$.
Additionally, $\alpha_d$ is as in the statement of \cref{lem:ind}.
We have
\begin{align*}
\dim(P)
&\leq \dim(Q) && \text{by \cref{lem:split}}\\
&\leq \idim(Q,I)+1 && \text{by \cref{lem:dimension}}\\
&\leq \lceil\idim^*(Q,I)\max\set{1,\ln|I|}\rceil+1 && \text{by \cref{lem:round}}\\
&\leq\idim^*(Q,I)\cdot\ln(n^2)+2\\
&\leq8edn^{\alpha_d}\ln n+2 && \text{by \cref{lem:ind}}.
\end{align*}
Since $\alpha_d = 1 - 1/(2^{d-1}-1)<1$, choosing any $\alpha\in(\alpha_d,1)$ gives $f(d,n)=\Oh_d(n^\alpha)$.
\end{proof}

\section{Lower bounds}\label{sec:lower}
In this section, we prove \cref{thm:lower}, i.e.\ we construct $S_d$-free posets with relatively large dimension.
In \cref{ssec:S3}, we prove the case of $d=3$.
This is done via a simple counting argument using a well-known construction of $C_6$-free bipartite graphs with $n$ vertices and $\Omega(n^{4/3})$ edges.
In \cref{ssec:box-spectral}, we introduce some necessary tools: boxicity, relations between boxicity and dimension, and spectral estimates.
In \cref{ssec:general}, we complete the proof of \cref{thm:lower} by considering incidences in high-dimensional projective spaces.

\subsection{Posets with no \texorpdfstring{$S_3$}{S3} and large dimension}\label{ssec:S3}


The following statement implies the case of $d=3$ in \cref{thm:lower}.

\begin{theorem} For every integer $n \geq 2$, we have
\[
f(3,n) > \frac{n^{1/3}}{8\log n}.
\]
\label{thm:f3n}
\end{theorem}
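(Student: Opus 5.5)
The plan is a counting argument: there are many labelled $S_3$-free posets but few labelled posets of small dimension. The $S_3$-free posets will come from $C_6$-free bipartite graphs. Given a bipartite graph $G$ with sides $A$ and $B$, I define a height-at-most-$2$ poset $P_G$ on $A\cup B$ by setting $a<b$ if and only if $ab\in E(G)$, for $a\in A$ and $b\in B$.

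First I check that a $C_6$-free $G$ gives an $S_3$-free $P_G$. In a height-$2$ poset, any copy of $S_3$ must have $a_1,a_2,a_3\in A$ and $b_1,b_2,b_3\in B$, because each $a_i$ lies below two distinct elements and each $b_j$ lies above two. Its comparabilities $a_i<b_j$ for $i\neq j$ are six edges of $G$. These edges form the $6$-cycle $a_1b_2a_3b_1a_2b_3a_1$. So if $G$ has no $C_6$, then $P_G$ is $S_3$-free. The same holds for every spanning subgraph $H\subseteq G$, since $H$ is also $C_6$-free.

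Second, I fix one $C_6$-free bipartite graph $G$ on a labelled vertex set of size $n$ with $e(G)\ge c\,n^{4/3}$ edges. I will take the incidence graph of a generalized quadrangle $W(q)$ (girth $8$, about $q^3$ vertices on each side, degree $q+1$). To reach every $n$, I pick a prime power $q$ with $2(q^3+q^2+q+1)\le n$ as large as possible, using Bertrand's postulate, and pad with isolated vertices. Different spanning subgraphs $H$ give different labelled posets $P_H$ on the same ground set, so there are $2^{e(G)}$ distinct labelled $S_3$-free posets on $n$ points.

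Third, I count labelled posets of dimension at most $t$ on a fixed $n$-element set. Each is determined by a realizer consisting of $t$ linear orders, so there are at most $(n!)^t\le 2^{tn\log n}$ of them. If $2^{e(G)}>2^{tn\log n}$, some $P_H$ has dimension greater than $t$. This works for $t$ just below $e(G)/(n\log n)\approx c\,n^{1/3}/\log n$.

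I expect the main obstacle to be bookkeeping of constants rather than any conceptual step. I need an explicit $c$ valid for all $n$, such that after the Bertrand loss (at most a factor $2$ in $q$, hence $8$ in $q^3$) the bound still gives $n^{1/3}/(8\log n)$. For small $n$, where the right-hand side is below $1$, the claim is trivial since $f(3,n)\ge1$. If the generalized-quadrangle constants prove too tight, I would fall back on tuning which $q$ is chosen, or on a direct estimate on small ranges of $n$.
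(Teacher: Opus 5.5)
\textbf{There is a quantitative gap in where you pad.} Your construction is the paper's: Benson's $C_6$-free graph is exactly the incidence graph of $W(q)$, and your $S_3$-to-$C_6$ check is correct. The trouble is the order of steps. Because you add the isolated vertices \emph{before} counting, you count labelled posets on all $n$ points. That only gives $\dim(P_H)\ge e(G)/\log(n!)\approx (q+1)^2(q^2+1)/(n\log n)$.

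Bertrand's postulate (or using powers of $2$) only guarantees $n<2(2q+1)(4q^2+1)\approx 16q^3$. For $n$ near that end, your bound is about $q/(16\log n)\approx n^{1/3}/(16^{4/3}\log n)\approx n^{1/3}/(40\log n)$. This falls short of $n^{1/3}/(8\log n)$ by a factor of about $5$. Replacing $n\log n$ by $\log(n!)$ gains only a factor $1+O(1/\log n)$, and a smaller $q$ only makes things worse. The padding inflates the denominator linearly, so the factor $8$ you lose in $q^3$ enters the final bound in full rather than through a cube root.

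\textbf{How the paper avoids this.} It counts on the $2m$-vertex graph $G_q$ alone. Then $2^{m(q+1)}\le((2m)!)^{d}$ gives $d\ge (q+1)/(2\log(2m))$. Only afterwards are $n-2m$ isolated elements added, which preserves $S_3$-freeness and cannot lower the dimension. Take $q$ to be the largest power of $2$ with $2(q+1)(q^2+1)\le n$. Then $n<2(2q+1)(4q^2+1)\le 64q^3$, so $q>n^{1/3}/4$. Together with $\log(2m)\le\log n$, this gives exactly $n^{1/3}/(8\log n)$ for $n\ge 30$; smaller $n$ are trivial.

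If you want to keep padding first, you would need consecutive prime powers to have ratio below roughly $1.33$. That requires an explicit prime-gap theorem stronger than Bertrand's postulate. You would also need a separate check of small $n$, where the right-hand side is below $2\le f(3,n)$.
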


Let $G$ be a bipartite graph with a fixed bipartition $(A,B)$ of the vertex set.
We denote by \defin{$P_G$} the poset on $A\cup B$ whose strict comparisons are
\[
a<b\text{ in }P_G
\quad\Longleftrightarrow\quad
a\in A,\ b\in B, \textrm{ and }  ab\in E(G).
\]

\begin{lemma}\label{lem:counting-lower}
Let $N$ and $M$ be integers with $N\geq2$ and $M\geq0$. 
Let $G$ be a bipartite graph with a fixed bipartition, $N$ vertices and $M$ edges.
There is a spanning subgraph $H$ of $G$ such that
\[
\dim(P_H)\geq\frac{M}{\log(N!)}.
\]
\end{lemma}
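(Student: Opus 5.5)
This is a counting argument: compare the number of spanning subgraphs of $G$ with the number of posets of dimension at most $k$ that can arise. Set $k = \max\{\dim(P_H) : H \text{ a spanning subgraph of } G\}$ and suppose, for contradiction, that $k < M/\log(N!)$. There are exactly $2^M$ spanning subgraphs $H$ of $G$, one for each subset of $E(G)$. The vertex set $A\cup B$ and its bipartition are fixed, so distinct subgraphs $H$ yield distinct posets $P_H$ on the same ground set.

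Each such $P_H$ has a realizer consisting of at most $k$ linear orders on the fixed $N$-element set $A\cup B$. If $\dim(P_H)<k$, we pad the realizer by repeating an order. A realizer determines the poset completely: $x<y$ in $P_H$ if and only if $x<y$ in every order of the realizer. Therefore the map sending a $k$-tuple of linear orders to the poset it realizes, namely the intersection of the orders, is surjective onto $\{P_H\}$. This yields
\[
2^M \leq (N!)^k .
\]
Taking logarithms gives $M \leq k\log(N!)$, that is, $k \geq M/\log(N!)$. Hence some $H$ satisfies $\dim(P_H)\geq M/\log(N!)$.

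Some care is needed at the edges. We need $N\ge2$ so that $\log(N!)>0$ and the ratio is defined. When $M=0$ the claim is trivial, since $\dim\ge1>0$. We must also make sure the intersection of the orders recovers exactly the strict comparabilities of $P_H$, and not merely some poset containing them. This holds by the definition of a realizer, because every incomparable pair of $P_H$ is reversed in some order of the realizer. There is no real obstacle here. The only subtle point is to note that we are counting labeled posets on a fixed ground set, so that injectivity from edge sets $E(H)$ to posets $P_H$ is immediate from the definition of $P_H$.
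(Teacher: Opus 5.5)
Your proposal is correct and follows the paper's proof: count the $2^M$ distinct labeled posets $P_H$, note that each is determined by (padded) $k$ linear orders on the fixed $N$-element ground set, and conclude $2^M\le (N!)^k$. The ``suppose for contradiction'' framing is never actually used, since your argument derives $k\ge M/\log(N!)$ directly.
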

\begin{proof}
There are $2^M$ distinct spanning subgraphs of $G$ and they give rise to a family $\mathcal{P}$ of $2^M$ distinct (labeled) posets.
Let $d$ be the maximum dimension of posets in $\mathcal{P}$. 
Therefore, for each $P \in \mathcal{P}$ we can fix a sequence of exactly 
$d$ linear orders on the $N$-element ground set that uniquely determine $P$. 
Therefore, we have at most $(N!)^d$ posets in $\mathcal{P}$.
Thus, we obtained $2^M\leq (N!)^d$, which proves the assertion.
\end{proof}

Note that the comparability graph of $S_3$ is a cycle of length $6$.
Let $G$ be a bipartite graph that is $C_6$-free, i.e., it does not contain a cycle of length $6$ as a subgraph. 
These graphs are relevant as for each subgraph $H$ of $G$, we have that $P_H$ is $S_3$-free.
We need the following classical result from the extremal graph theory.

\begin{theorem}[Benson~\cite{Ben66}] 
For every prime power $q$, there is a graph $G_q$ such that the following holds. Let $m=(q+1)(q^2+1)$. Then 
\begin{enumerate}
    \item $G_q$ is bipartite and $C_6$-free,
    \item $G_q$ has $2m$ vertices, and
    \item $G_q$ has $m(q+1)$ edges.
\end{enumerate}
\label{thm:C6free}
\end{theorem}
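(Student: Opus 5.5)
The plan is to realize $G_q$ as the point--line incidence graph of the symplectic generalized quadrangle $W(q)$. Let $V=\F_q^4$ with the nondegenerate alternating form
\[
\beta(x,y)=x_1y_2-x_2y_1+x_3y_4-x_4y_3 .
\]
The part $A$ of the bipartition will be the set of all points of $\PG(3,q)$, that is, the $1$-dimensional subspaces of $V$. There are $(q^4-1)/(q-1)=(q+1)(q^2+1)=m$ of them. The part $B$ will be the set of totally isotropic $2$-dimensional subspaces, i.e.\ those $U$ with $\beta(U,U)=0$. A point $p$ is adjacent to a line $\ell$ when $p\subseteq\ell$. The graph is bipartite by construction.

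\textbf{Degree counting.} Fix a point $p=\langle v\rangle$. Since $\beta$ is alternating, $\beta(v,v)=0$, so $p\subseteq p^\perp$. Nondegeneracy gives $\dim p^\perp=3$. A totally isotropic line through $p$ must lie in $p^\perp$. Conversely, every $2$-space $\langle v,w\rangle$ with $w\in p^\perp$ is totally isotropic, because $\beta(v,v)=\beta(w,w)=0$ and $\beta(v,w)=0$. Hence the lines through $p$ correspond bijectively to the points of the $2$-dimensional quotient $p^\perp/p$, and there are exactly $q+1$ of them. So every vertex of $A$ has degree $q+1$, and $|E(G_q)|=m(q+1)$. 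Each line contains exactly $q+1$ points, so $|B|=m(q+1)/(q+1)=m$. This gives $2m$ vertices in total.

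\textbf{$C_6$-freeness.} A $6$-cycle in the incidence graph consists of three distinct points $p_1,p_2,p_3$ and three distinct lines $\ell_{12},\ell_{23},\ell_{31}$ with $p_i,p_j\subseteq\ell_{ij}$.
\begin{enumerate}
\item Two distinct points span a unique $2$-space, so $\ell_{ij}=p_i+p_j$.
\item If the three points spanned only a $2$-space, then all three lines would coincide, a contradiction. Hence $W=p_1+p_2+p_3$ has dimension $3$.
\item Points on a common totally isotropic line are orthogonal, so the $p_i$ are pairwise orthogonal. Each $p_i$ is also self-orthogonal, so $W$ is totally isotropic.
\item Then $W\subseteq W^\perp$, while $\dim W^\perp=4-\dim W=1$, which is impossible.
\end{enumerate}
So $G_q$ has no $C_6$. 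The same argument shows it has no $C_4$, though this is not needed.

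The only step needing care is the degree count, specifically the claim that every $2$-space through $p$ inside $p^\perp$ is isotropic. This is exactly where the alternating (rather than merely symmetric) nature of $\beta$ is used. The rest is routine linear algebra over $\F_q$, valid for every prime power $q$, including characteristic $2$, since the form is alternating in every characteristic.
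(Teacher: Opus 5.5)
Your proof is correct. The paper gives no proof of this statement and only cites Benson, so your argument supplies one where the paper has none.

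Benson's graphs are point--line incidence graphs of generalized quadrangles of order $q$, and your symplectic quadrangle $W(q)$ is one of these. The orthogonal quadrangle $Q(4,q)$ on a parabolic quadric is dual to $W(q)$, and dualizing only swaps the two sides of the incidence graph. So it does not matter which classical model is used, and you are in effect verifying the cited construction directly.

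The symplectic model is a good choice for a self-contained argument. Every vector is isotropic for an alternating form, so the degree count reduces to counting points of the $2$-dimensional space $p^\perp/p$, uniformly in all characteristics. $C_6$-freeness reduces to the fact that a nondegenerate form on $\F_q^4$ has no totally isotropic $3$-space.

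Your argument also shows that $G_q$ is $(q+1)$-regular. As a generalized quadrangle incidence graph, it has second largest singular value $\sqrt{2q}$. These are exactly the properties beyond the statement that the paper invokes after \cref{lem:spectral-lower} when it applies that lemma to $G_q$.
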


\begin{proof}[Proof of \cref{thm:f3n}]
The statement is trivial for $n < 30$, so assume that $n \geq 30$.
Let $q$ be the largest positive power of $2$ with $2(q+1)(q^2+1)\leq n$. 
Let $m=(q+1)(q^2+1)$.
By the choice of $q$, we have 
\begin{align}\label{eq:n-q}
n < 2(2q+1)((2q)^2+1)\leq 64q^3,\ \textrm{so $q>\frac{n^{1/3}}{4}$.}
\end{align}

Let $G_q$ be the graph given by~\cref{thm:C6free}. By~\cref{lem:counting-lower} there exists a subgraph $H$ of $G_q$ such that
\[
\dim(P_H) \geq \frac{m(q+1)}{\log((2m)!)}.
\]
Recall that for each integer $x\geq1$, we have $\log(x!)=\sum_{i\in[x]}\log i\leq x\log x$. Therefore, we have
\begin{align*}
\dim(P_H)\geq \frac{m(q+1)}{2m\cdot\log(2m)}\geq\frac{q+1}{2\log(2m)}> \frac{n^{1/3}}{8\log n}.
\end{align*}
In the last inequality, we used \cref{eq:n-q} and the fact that $2m \leq n$.
Adding isolated elements to a poset preserves $S_3$-freeness and cannot decrease dimension.
Hence, by adding $n - 2m$ isolated elements to $P_H$, we obtain an $n$-element $S_3$-free poset of dimension larger than $n^{1/3} / (8\log n)$. This completes the proof.
\end{proof}

\subsection{Boxicity and spectral tools}\label{ssec:box-spectral}

In this subsection, we introduce tools needed to prove the \q{$d\geq 4$} part of \cref{thm:lower}.

An \defin{interval graph} is a graph isomorphic to an intersection graph of closed intervals on the real line.
An \defin{interval representation} of an interval graph is an assignment of intervals to the vertices of the graph witnessing the isomorphism.
The \defin{intersection} of two graphs $G_1$ and $G_2$ on the same set of vertices $V$ is the graph $(V,E(G_1)\cap E(G_2))$.
The \defin{boxicity} of a graph $G$, denoted by \defin{$\boxi(G)$}, is the minimum number of interval graphs on $V(G)$ whose intersection is $G$.
For a bipartite graph $G$ with a fixed bipartition, Adiga, Bhowmick, and Chandran~\cite[Corollary~1]{ABC11} proved that
\begin{equation}\label{eq:box-dimension}
\boxi(G)\leq\dim(P_G),
\end{equation}
and $\dim(P_G)\leq2\boxi(G)$ provided that $G$ is not a complete graph.

We will use the bipartite expander mixing lemma, which dates back to Haemers~\cite[Theorem~3.1.1]{Hae79}.
A concise proof of the statement implying the one that we use can be found in the appendix of a paper by Lund and Saraf~\cite[Lemma~8]{Lund2016}.

A \defin{bipartite incidence matrix} of a bipartite graph $G$ with parts $A$ and $B$ is a $|A| \times |B|$ matrix $M$ with rows indexed by $A$, columns indexed by $B$, and $M_{ab}=1$ if $ab\in E(G)$ and $M_{ab}=0$ otherwise.

\begin{lemma}[Bipartite expander mixing lemma] 
Let $k$ and $m$ be integers with $1\leq k<m$.
Let $G$ be a $k$-regular bipartite graph with parts $A$ and $B$ of size $m$. 
Let $\sigma$ be the second largest singular value\footnotemark \ of the bipartite incidence matrix of $G$.
For all $X\subseteq A$ and $Y\subseteq B$, we have
\[
\left||E(G) \cap (X \times Y)|-\frac{k}{m}|X||Y|\right|
\leq\sigma\sqrt{|X||Y|}.
\]\label{lem:bipartite-mixing}
\end{lemma}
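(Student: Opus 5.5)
Write $M$ for the $m\times m$ bipartite incidence matrix of $G$, with rows indexed by $A$ and columns by $B$, and let $\chi_X\in\mathbb{R}^A$ and $\chi_Y\in\mathbb{R}^B$ be the indicator vectors of $X$ and $Y$. The plan is to use the singular value decomposition of $M$ and isolate the top singular pair. Then $|E(G)\cap(X\times Y)| = \chi_X^{\top} M \chi_Y$.

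First I identify the top singular value and vectors. Since $G$ is $k$-regular on both sides, $M\mathbf{1}=k\mathbf{1}$ and $M^{\top}\mathbf{1}=k\mathbf{1}$. Hence $u_1=\mathbf{1}/\sqrt m$ and $v_1=\mathbf{1}/\sqrt m$ form a singular pair with value $k$. This is the largest singular value, because $\|M\|_2\le\sqrt{\|M\|_1\|M\|_\infty}=k$, the maximum row and column sums both being $k$. Write $M = k u_1v_1^{\top} + R$ with $R=\sum_{i\ge2}\sigma_i u_iv_i^{\top}$, taken from an SVD whose first pair is $(u_1,v_1)$. If $k$ is a repeated singular value, $u_1,v_1$ can still be chosen as the first pair. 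Then $\|R\|_2=\sigma_2=\sigma$.

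Next I compute the main term:
\[
\chi_X^{\top}\, k u_1v_1^{\top}\chi_Y = \frac{k}{m}(\chi_X^{\top}\mathbf{1})(\mathbf{1}^{\top}\chi_Y)=\frac{k}{m}|X||Y|.
\]
The deviation is therefore $\chi_X^{\top}R\chi_Y$. By Cauchy--Schwarz,
\[
|\chi_X^{\top}R\chi_Y|\le \|R\|_2\,\|\chi_X\|\,\|\chi_Y\| = \sigma\sqrt{|X||Y|}.
\]
This already gives the stated inequality. A sharper version would first project $\chi_X$ and $\chi_Y$ orthogonally to $\mathbf{1}$, but that is not needed here.

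The only delicate step is justifying that $k$ is the top singular value with singular vectors $\mathbf{1}/\sqrt m$ on both sides, so that removing that rank-one piece leaves operator norm exactly $\sigma$. I would handle this via the Perron--Frobenius-free bound $\|M\|_2^2\le\|M\|_1\|M\|_\infty$ stated above, together with the observation that $R$ acts on the orthogonal complements of $u_1$ and $v_1$. The remaining steps are routine linear algebra.
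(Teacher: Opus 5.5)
Your argument is correct. It is the standard spectral proof: split off the top singular pair $\mathbf{1}/\sqrt{m}$ with value $k$, then bound $\chi_X^{\top}R\chi_Y$ by $\|R\|_2\|\chi_X\|\|\chi_Y\|$. That is exactly what the paper relies on; it gives no proof of its own and cites Haemers and Lund--Saraf instead.

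One point worth keeping explicit is how you handle a repeated top singular value. There you read $\sigma$ as the second largest singular value counted with multiplicity, and that is the convention the lemma needs. For a disconnected $G$ one has $\sigma=k$, and the bound would fail under a ``second largest distinct value'' reading.
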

\footnotetext{
In some of the statements in the literature, second largest \emph{eigenvalue} of the symmetric adjacency matrix is used instead of the second largest \emph{singular value} of the bipartite incidence matrix.
However, in this case, these two coincide.
Let $M$ be a bipartite incidence matrix of a bipartite graph $G$.
Its singular values are the nonnegative square roots of the eigenvalues of $MM^{\mathsf T}$.
The usual symmetric adjacency matrix of $G$ is
\[
M' = \begin{pmatrix}0&M\\M^{\mathsf T}&0\end{pmatrix},
\]
and its eigenvalues are the singular values of $M$ and their negatives, counted with multiplicity.
Thus, $\sigma$ is also the second largest eigenvalue of $M'$, ordered by value rather than absolute value.
}

The next lemma is a key connection between dimension of a poset of height $2$ and spectral properties of its cover graph.
For every positive integer $m$, let
    \[\defmath{\har_m} = \sum_{i=1}^m \frac{1}{i}.\]
Given a graph $G$ and a subset of vertices $X$, the \defin{antineighbourhood} of $X$ in $G$ is the set $\defmath{A_G(X)} = \{v \in V(G)\setminus X : \text{$uv \notin E(G)$ for every $u \in X$}\}$.

\begin{lemma}\label{lem:spectral-lower}
Let $k$ and $m$ be integers with $1\leq k<m$.
Let $G$ be a $k$-regular bipartite graph with parts $A$ and $B$ of size $m$.
Let $\sigma$ be the second largest singular value of the bipartite incidence matrix of $G$.
Then,
\[
\dim(P_G)\geq \frac{1}{\sigma^2} \cdot 
\frac{k^2(m-k)}{2m \cdot \har_m}.
\]
\end{lemma}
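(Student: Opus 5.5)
The plan is to argue directly with a realizer of $P_G$ rather than through boxicity. I will count the incomparable min-max pairs in two ways. Each linear extension can reverse only few such pairs, by \cref{lem:bipartite-mixing}, while $P_G$ has many such pairs. Since $G$ is $k$-regular with parts of size $m$, the min-max incomparable pairs $(a,b)\in A\times B$ are exactly the non-edges of $G$ between the parts, and there are $m(m-k)$ of them. Each must be reversed in some linear extension of a realizer. Hence it suffices to prove that every linear extension $L$ of $P_G$ reverses at most
\[
\frac{\sigma^2 m^2 \har_m}{k^2}
\]
min-max incomparable pairs. This gives $\dim(P_G)\geq m(m-k)k^2/(\sigma^2 m^2\har_m)$, which is even better than the claim by a factor of $2$.

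Fix a linear extension $L$ and $b\in B$. The key observation is that every neighbour of $b$ lies below $b$ in $L$. So every $a\in A$ with $b<a$ in $L$ is automatically incomparable to $b$, and the pairs reversed at $b$ are exactly the elements of $A$ above $b$ in $L$. Let $r(b)$ be the number of elements of $A$ above $b$ in $L$. The number of reversed pairs is then $\sum_{b\in B} r(b)=\sum_{j=1}^{m}|Y_j|$, where $Y_j=\{b\in B : r(b)\geq j\}$.

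Next, for $j\in[m]$, let $X_j$ be the set of the top $j$ elements of $A$ in $L$. Every $b\in Y_j$ lies below all of $X_j$ in $L$, while all neighbours of $b$ lie below $b$. Hence $E(G)\cap (X_j\times Y_j)=\emptyset$. Applying \cref{lem:bipartite-mixing} to $X_j$ and $Y_j$ gives $\frac{k}{m}|X_j||Y_j|\leq\sigma\sqrt{|X_j||Y_j|}$. Therefore $|X_j||Y_j|\leq \sigma^2m^2/k^2$, that is, $|Y_j|\leq \sigma^2m^2/(k^2 j)$; this is trivial when $Y_j=\emptyset$. Summing over $j$ from $1$ to $m$ yields the desired bound of $\sigma^2m^2\har_m/k^2$ per extension.

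The main step is spotting the layered decomposition by the sets $X_j$ and $Y_j$, which produces the harmonic sum. The rest is routine. One should also check that $\sigma>0$: if $\sigma=0$, then the argument above would give an empty realizer, which is impossible because $k<m$ guarantees that incomparable pairs exist. So the bound is meaningful.
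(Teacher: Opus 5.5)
Your proof is correct, and it takes a different route from the paper's, with a constant better by a factor of $2$.

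The paper never works with linear extensions directly. It goes through boxicity in three steps:
\begin{enumerate}
\item $\dim(P_G)\geq\boxi(G)$, a cited result of Adiga, Bhowmick, and Chandran;
\item $\boxi(G)\geq\frac12\boxi(G^+)$, where $G^+$ is $G$ with $A$ and $B$ made into cliques;
\item the Adiga--Chandran--Sivadasan bound $\boxi(G^+)\geq|E(G')|/\sum_{i=1}^{2m-1}c_i$, where $c_i$ is the largest antineighbourhood of an $i$-set in $G^+$.
\end{enumerate}
Step (iii) is proved by ordering the vertices of an interval supergraph of $G^+$ by right endpoints and charging each non-edge to the antineighbourhood of a prefix. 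The mixing lemma then gives $c_i\leq m^2\sigma^2/(k^2 i)$, and the harmonic sum arises as in your argument.

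Your proof is that same counting step carried out directly on a linear extension $L$. The top segments $X_j$ of $A$ in $L$ replace the prefixes, and the reversed min--max pairs replace the non-edges of the interval graph. Because your charging sets lie entirely in $A$, you apply the mixing lemma only to pairs $X_j\subseteq A$, $Y_j\subseteq B$, so you never need the reduction to one side.

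What your approach buys: the proof is self-contained, with no boxicity and no external inequality between $\boxi(G)$ and $\dim(P_G)$. It also avoids the factor-$2$ loss in passing from $G$ to $G^+$, giving $\dim(P_G)\geq k^2(m-k)/(\sigma^2 m\har_m)$.

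What the paper's approach buys: it proves the stated bound for $\boxi(G)$ itself, an invariant that is at most $\dim(P_G)$.

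Your remark that the hypotheses force $\sigma>0$ is correct and worth keeping; the paper divides by $\sigma^2$ without comment.
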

\begin{proof}
Let $G^+$ be obtained from $G$ by making each of $A$ and $B$ a clique. 
Let $G'$ be a bipartite complement of $G$, i.e., the graph obtained from $G$ by taking a complement and making each of $A$ and $B$ an independent set.
Note that $G' = \overline{G^+}$.
For each $i \in [2m-1]$, let 
\[c_i = \max\{|A_{G^+}(Z)| : Z \subset V(G) \text{ and } |Z| = i\}.\]
Adiga, Chandran, and Sivadasan~\cite[Theorem~2 and Lemma~4]{ACS14} proved the following inequalities that we reprove for completeness.
\begin{claim*}[\cite{ACS14}]
    \[
    2\boxi(G)\geq\boxi(G^+)
    \geq\frac{|E(G')|}{\sum_{i=1}^{2m-1}c_i}.
    \]
\end{claim*}
\begin{proofclaim}
    For the first inequality, set $b = \boxi(G)$ and take an interval representation of each of the $b$ interval graphs $H_1,\dots,H_b$ on $V(G)$ whose intersection is $G$.
    Rescale its intervals to lie in $[0,1]$.
    For every $v \in V(G)$ and every $j \in [b]$, replace the interval $[\ell_v,r_v]$ corresponding to $v$ in the representation of $H_j$ by
    \[
    I_1^j(v)=
    \begin{cases}
    [0,r_v]&\text{if }v\in A,\\
    [\ell_v,1]&\text{if }v\in B,
    \end{cases}
    \qquad
    I_2^j(v)=
    \begin{cases}
    [\ell_v,1]&\text{if }v\in A,\\
    [0,r_v]&\text{if }v\in B.
    \end{cases}
    \]
    We obtain $2b$ interval graphs whose intersection is $G^+$.

    Next, we prove the second inequality.
    Let $H$ be an interval graph on $V(G)$ that is a supergraph of $G^+$.
    Order its vertices $v_1,\dots,v_{2m}$ by nondecreasing right endpoints.
    If for some integers $i,j \in [2m]$ with $i < j$, we have $v_iv_j\notin E(H)$, then the interval of $v_j$ starts after the right endpoint of $v_i$.
    In particular, $v_j \in A_{G^+}(\{v_1,\dots,v_i\})$.
    It follows that for a given $i \in [2m-1]$, there are at most $c_i$ numbers $j \in [2m]$ with $i < j$ and $v_iv_j \notin E(H)$.
    Therefore, $|E(\overline{H})| \leq \sum_{i=1}^{2m-1}c_i$.

    Let $b = \boxi(G^+)$ and let $H_1,\dots,H_b$ be interval graphs on $V(G)$ whose intersection is $G^+$.
    Then, $E(\overline{G^+}) \subset E(\overline{H_1}) \cup \dots \cup E(\overline{H_b})$, and in particular, $|E(\overline{G^+})| \leq |E(\overline{H_1})| + \dots + |E(\overline{H_b})|$.
    Since for every $\ell \in [b]$, the graph $H_\ell$ contains $G^+$, we obtain
    \[|E(\overline{G^+})| \leq b \cdot \sum_{i=1}^{2m-1}c_i, \]
    which proves the second inequality.
\end{proofclaim}

If $X\subseteq A$ and $Y\subseteq B$ have no edge between them in $G$, \cref{lem:bipartite-mixing} gives
\[
\frac{k}{m}|X||Y|\leq\sigma\sqrt{|X||Y|}, \quad \text{and hence}\quad
|X||Y|\leq\frac{m^2\sigma^2}{k^2}.
\]
Let $Z \subset V(G)$ with $|Z| = i$ be such that $c_i = |A_{G^+}(Z)|$ for some fixed $i \in [m]$.
We may assume that $Z$ is contained in one of the parts $A$ or $B$.
It follows that $|Z| \cdot |A_{G^+}(Z)| \leq \frac{m^2\sigma^2}{k^2}$.
In particular, $c_i\leq m^2\sigma^2/(k^2i)$ when $i\leq m$, and $c_i=0$ when $i>m$.
Since $|E(G')|=m(m-k)$, applying \cref{eq:box-dimension} and the claim, we obtain
\[
\dim(P_G) \geq \boxi(G)\geq \frac{|E(G')|}{2\cdot \sum_{i=1}^{2m-1} c_i} \geq 
\frac{m(m-k)}{2(m^2\sigma^2/k^2)\har_m} = \frac{1}{\sigma^2} \cdot \frac{k^2(m-k)}{2m \cdot \har_m},
\]
as desired.
\end{proof}

We note that an asymptotic bound as in \cref{thm:f3n} can also be obtained from an application of \cref{lem:spectral-lower}.
An advantage is that this gives an explicit construction omitting the counting argument (\cref{lem:counting-lower}).
The second largest singular value of the bipartite incidence matrix of the graph $G_q$ witnessing the statement of \cref{thm:C6free} is equal to $\sqrt{2q}$, see \cite[Lemma~4.7]{AFNW06}.
Additionally, $G_q$ is $(q+1)$-regular.
Thus, \cref{lem:spectral-lower} applies with $m = (q+1)(q^2+1)$, $k = q+1$, and $\sigma = \sqrt{2q}$.
Then, $\dim(P_{G_q}) = \Omega(q / \log q)$ and $G_q$ has $\Theta(q^3)$ vertices.

\subsection{Projective spaces}\label{ssec:general}

F\"uredi and Kahn~\cite{FK88} proved that the lattice of a projective plane of order $q$ has dimension at least $q/\ln q$, where $\ln$ denotes the natural logarithm.
Deleting its least and greatest elements preserves dimension and leaves the point--line incidence poset.
This poset is $S_4$-free: two distinct points lie on a unique common line, whereas two minimal elements of $S_4$ have two common upper elements.
It has $2(q^2+q+1)$ elements.
Consequently, the bound
\[
f(4,n)=\Omega\!\left(\frac{n^{1/2}}{\log(n+1)}\right)
\]
already follows from their result.

We use the same incidence construction in higher-dimensional projective spaces.
Let $r\geq2$ be an integer and let $q$ be a prime power.
We define a poset \defin{$\PG(r,q)$} as follows.
Consider a vector space $\F_q^{r+1}$.
The one-dimensional subspaces of $\F_q^{r+1}$ are the minimal elements of the poset (we call them \defin{lines}) and the $r$-dimensional subspaces of $\F_q^{r+1}$ are the maximal elements of the poset (we call them \defin{hyperplanes}).
Additionally, for a line $\ell$ and a hyperplane $h$, we set
\[
\ell<h\text{ in }\PG(r,q)
\quad\Longleftrightarrow\quad
\ell\subseteq h.
\]
There are $m = \frac{q^{r+1}-1}{q-1} = 1+q+\cdots+q^r$ lines and the same number of hyperplanes in $\PG(r,q)$.
Also, every element of $\PG(r,q)$ is comparable with $k=\frac{q^{r}-1}{q-1}=1+q+\cdots+q^{r-1}$ elements.
The second largest singular value of the bipartite incidence matrix of the cover graph of $\PG(r,q)$ is equal to $q^{(r-1)/2}$.
These are standard projective-space parameters; see Alon, Moran, and Yehudayoff~\cite[Appendix~D, Lemma~25]{AMY16} for the spectral calculation.

\begin{lemma}\label{prop:projective-lower}
Let $r\geq2$ be an integer and let $q$ be a prime power.
Let $m = 1+q+\cdots+q^r$.
Then, $\PG(r,q)$ is $S_{r+2}$-free and 
\[
\dim(\PG(r,q))\geq\frac{q^{r-1}}{4 \cdot \har_m}.
\]
\end{lemma}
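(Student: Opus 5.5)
Two things need to be shown: that $\PG(r,q)$ is $S_{r+2}$-free, and the dimension bound. The first is a linear-algebra argument and the second is a direct application of \cref{lem:spectral-lower}.

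\emph{$S_{r+2}$-freeness.} Suppose lines $\ell_1,\dots,\ell_{r+2}$ and hyperplanes $h_1,\dots,h_{r+2}$ form a standard example, so that $\ell_i\subseteq h_j$ if and only if $i\neq j$. I would argue as follows.
\begin{enumerate}
\item Pick a nonzero vector $v_i$ spanning $\ell_i$ for each $i$. Since there are $r+2$ vectors in $\F_q^{r+1}$, they are linearly dependent.
\item Take a nontrivial relation $\sum_i \lambda_i v_i=0$ and an index $j$ with $\lambda_j\neq 0$. Then $v_j$ is a linear combination of the $v_i$ with $i\neq j$.
\item Every $v_i$ with $i\neq j$ lies in $h_j$, and $h_j$ is a subspace, so $v_j\in h_j$. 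This means $\ell_j\subseteq h_j$, which is a contradiction.
\end{enumerate}
In fact this argument already excludes $S_{r+2}$ as an induced subposet. (The one-line version: any $r+1$ of the $v_i$ span at most $\F_q^{r+1}$, and one spanning family suffices.) This part is routine.

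\emph{Dimension bound.} I would apply \cref{lem:spectral-lower} to the cover graph $G$ of $\PG(r,q)$. By the stated projective-space facts, $G$ is a $k$-regular bipartite graph with parts of size $m$, where $k=1+q+\dots+q^{r-1}$ and $1\le k<m$, and $\sigma=q^{(r-1)/2}$. The poset $P_G$ is exactly $\PG(r,q)$. The lemma then gives
\[
\dim(\PG(r,q))\geq \frac{k^2(m-k)}{2m\,q^{r-1}\,\har_m}.
\]
It remains to show that $\frac{k^2(m-k)}{m}\geq \frac{q^{2r-2}}{2}$. For this:
\begin{itemize}
\item $m-k=q^r$;
\item $k\geq q^{r-1}$;
\item $m=q^r+k$, and $k\le 2q^{r-1}\le q^r$ (using $q\ge2$), so $m\le 2q^r$.
\end{itemize}
Combining these, $k^2(m-k)/m\geq q^{2r-2}\cdot q^r/(2q^r)=q^{2r-2}/2$. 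Dividing by $2q^{r-1}\har_m$ gives $\dim\geq q^{r-1}/(4\har_m)$, as claimed.

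The only place needing care is this last estimate, in particular the upper bound $m\le 2q^r$, which relies on the geometric-series bound $k=(q^r-1)/(q-1)\le 2q^{r-1}$ for $q\ge 2$. Everything else is quoted from earlier results.
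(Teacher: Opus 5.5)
Your proposal is correct and follows essentially the same route as the paper. The paper likewise derives $S_{r+2}$-freeness from the impossibility of $r+2$ linearly independent vectors in $\F_q^{r+1}$ (you phrase it contrapositively via a nontrivial dependency), and it also applies \cref{lem:spectral-lower} with $k=1+q+\cdots+q^{r-1}$ and $\sigma=q^{(r-1)/2}$. Your estimates $m-k=q^r$ and $m\le 2q^r$ amount to the paper's observation that $k<m/2$, i.e.\ $(m-k)/m\ge 1/2$, combined with $k\ge q^{r-1}$.
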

\begin{proof}
First, we prove that $\PG(r,q)$ is $S_{r+2}$-free.
Suppose to the contrary that lines $\ell_1,\dots,\ell_{r+2}$ and hyperplanes $h_1,\dots,h_{r+2}$ induce $S_{r+2}$ in $\PG(r,q)$.
For every $i\in[r+2]$, choose a nonzero vector $v_i\in \ell_i$.
Note that for every $i \in [r+2]$, $\{v_j : j \in [r+2]\setminus\{i\}\} \subset h_i$.
Since $v_i \notin h_i$, it follows that $v_i \notin \text{span}\{v_j : j \in [r+2]\setminus\{i\}\}$.
In particular, $v_1,\dots,v_{r+2}$ are linearly independent.
This is a contradiction as $\F_q^{r+1}$ is of dimension $r+1$.
Therefore, $\PG(r,q)$ is indeed $S_{r+2}$-free.

We apply \cref{lem:spectral-lower} to the cover graph of $\PG(r,q)$ with $k = 1 + q + \dots + q^{r-1}$ and $\sigma = q^{(r-1)/2}$.
Note that $m = qk+1$, $k < m/2$, and $k \geq q^{r-1}$.
We obtain
\[
\dim(\PG(r,q))
\geq \frac{1}{q^{r-1}} \cdot \frac{k^2(m-k)}{2m \cdot\har_m}
\geq \frac{1}{q^{r-1}} \cdot \frac{k^2}{4 \cdot\har_m}
\geq\frac{q^{r-1}}{4\cdot \har_m},
\]
as required.
\end{proof}

\begin{proof}[Proof of \cref{thm:lower}]
    Let $d\geq 4$ be an integer.
    Set $r = d-2$.
For a sufficiently large $n$, choose the largest integer $q \geq 2$ that is a power of $2$ and $2(1+q+\cdots+q^r)\leq n$.
Then,
\[
n<2\sum_{i=0}^r(2q)^i\leq 2^{r+2}q^r,
\]
so $q=\Omega_r(n^{1/r})$.
Also, $\har_m=\Oh(\log(n+1))$.
By \cref{prop:projective-lower}, adding isolated elements to $\PG(r,q)$ gives an $n$-element $S_d$-free poset with dimension
\[
\Omega_d\!\left(\frac{n^{(r-1)/r}}{\log(n+1)}\right)
=\Omega_d\!\left(\frac{n^{1-1/(d-2)}}{\log(n+1)}\right).
\]
This proves the remaining part of \cref{thm:lower}.
\end{proof}






\section*{AI statement}
The proofs were found by OpenAI's ChatGPT-6 Astra. 
The authors digested the initially generated output and made an effort to present the argument in a readable and optimized form. The authors take responsibility for
the mathematical correctness of the presented arguments.

\bibliographystyle{plain}
\bibliography{bibliography}
\end{document}